\newcommand{\bigM}{\mathbb{M}}
\DeclareMathAlphabet{\mathdutchcal}{U}{dutchcal}{m}{n}
\SetMathAlphabet{\mathdutchcal}{bold}{U}{dutchcal}{b}{n}
\DeclareMathAlphabet{\mathdutchbcal}{U}{dutchcal}{b}{n}
\newcommand{\sA}{\mathdutchcal{A}}
\newcommand{\sB}{\mathdutchcal{B}}
\newcommand{\sV}{\mathdutchcal{V}}
\newcommand{\sT}{\mathdutchcal{T}}
\newcommand{\sE}{\mathdutchcal{E}}
\newcommand{\sL}{\mathdutchcal{L}}
\newcommand{\sP}{\mathdutchbcal{P}}
\newcommand{\sR}{\mathdutchcal{R}}
\newcommand{\sC}{\mathdutchcal{C}}
\Crefname{figure}{Figure}{Figures}
    \crefname{figure}{Figure}{figures}%
\let\origref\cref
\def\cref#1{\origref{#1}}
\renewenvironment{proof}[1][\proofname]{{\bfseries #1.}}{\qed}
\def\expandafter\UrlBreaks\expandafter{\UrlBreaks
  \do\a\do\b\do\c\do\d\do\e\do\f\do\g\do\h\do\i\do\j%
  \do\k\do\l\do\m\do\n\do\o\do\p\do\q\do\r\do\s\do\t%
  \do\u\do\v\do\w\do\x\do\y\do\z\do\A\do\B\do\C\do\D%
  \do\E\do\F\do\G\do\H\do\I\do\J\do\K\do\L\do\M\do\N%
  \do\O\do\P\do\Q\do\R\do\S\do\T\do\U\do\V\do\W\do\X%
  \do\Y\do\Z}
\DeclarePairedDelimiter\ceil{\lceil}{\rceil}
\newcommand{\norm}[1]{\lvert #1 \rvert}
\newcommand\omicron{o}
\newtheorem{theorem}{Theorem}
\newtheorem{lemma}[theorem]{Lemma}
\crefname{defi}{definition}{definitions}
\Crefname{defi}{Definition}{Definitions}
\crefname{lemma}{lemma}{lemmas}
\Crefname{lemma}{Lemma}{Lemmas}
\crefname{assumption}{assumption}{assumptions}
\Crefname{assumption}{Assumption}{Assumptions}
\providecommand{\keywords}[1]{\noindent\textbf{Keywords:} #1}
\begin{document}

\title{Heuristic Solutions to the Single Depot Electric Vehicle Scheduling Problem with Next Day Operability Constraints}


\author[1]{Amir Davatgari}

\affil[1]{University of Illinois Chicago, 1200 W. Harrison St., Chicago, 60607, IL, USA}

\author[2,3]{Taner Cokyasar}
\author[2]{Omer Verbas}
\author[1]{Abolfazl (Kouros) Mohammadian}

\affil[2]{Argonne National Laboratory, 9700 S. Cass Avenue, Lemont, 60439, IL, USA}
\affil[3]{TrOpt R\&D, Balcali mah., Saricam, 01330, Adana, Turkey}

\maketitle


\begin{abstract}
This study focuses on the single depot electric vehicle scheduling problem (SDEVSP) within the broader context of the vehicle scheduling problem (VSP). By developing an effective scheduling model using mixed-integer linear programming (MILP), we generate bus blocks that accommodate EVs, ensuring successful completion of each block while considering recharging requirements between blocks and during off-hours. Next day operability constraints are also incorporated, allowing for seamless repetition of blocks on subsequent days. The SDEVSP is known to be computationally complex, deriving optimal solutions unattainable for large-scale problems within reasonable timeframes. To address this, we propose a two-step solution approach: first solving the single depot VSP (SDVSP), and then addressing the block chaining problem (BCP) using the blocks generated in the first step. The BCP focuses on optimizing block combinations to facilitate recharging between consecutive blocks, considering operational constraints. A case study conducted reveals that nearly 100\% electrification for Chicago, IL and Austin, TX transit buses is viable yet requires 1.6 EVs at 150-mile range per diesel vehicle.
\end{abstract}

\keywords{
transit, bus electrification, optimization, electric vehicle scheduling problem}

\section{Introduction}\label[sec]{intro}

Public transportation plays a crucial role in cities by providing accessible, affordable, efficient, and equitable mobility options for travelers while helping to alleviate congestion. However, the use of conventional diesel vehicles (DVs) contributes to air pollution and carbon emissions, influencing air quality and public health \citep{FTA}. Electrification of transit buses has emerged as a solution to address these environmental challenges. By transitioning to electric vehicles (EVs), cities can significantly reduce harmful emissions and improve air quality \citep{MUNOZ2022115412}. (Note that the terms \emph{vehicle} and \emph{bus} are used interchangeably in this paper). Nevertheless, the adoption of electric buses comes with its own set of challenges. One major concern is the higher upfront cost of EVs compared to conventional DVs \citep{MUNOZ2022115412}. This cost disparity can impose financial barriers, particularly when there is a need to replace a large number of buses in existing fleets. Driving range, long charging time, and electricity grid impact of EVs are other issues to be tackled. Although technological advancements have improved battery capacity and charging speeds, EVs still have a shorter range and longer downtime compared to DVs. This can pose operational challenges, especially for longer routes that require long periods of operation. To overcome these challenges, one potential solution is to increase the number of buses in operation. However, the high cost of electric buses can be a hindrance. Therefore, optimizing EV scheduling becomes essential to minimize the bus fleet size and idle time, while ensuring sufficient recharging during idle periods. 

The vehicle scheduling problem (VSP) involves the creation of \emph{vehicle blocks} (hereafter called blocks) based on a set of timetabled service or revenue trips, called \emph{trips}. These trips come with essential spatio-temporal information, including their origin (first stop), destination (last stop), start time, and end time. The objective of the VSP is to strategically organize these trips into blocks that optimize the utilization of vehicles and ensure efficient transit operations. The VSP has been extensively studied for many years, and various solution approaches have been proposed to address its complexity. See \citet{bunte2009overview} and \citet{freling2001models} for comprehensive reviews. However, with the emergence and early adoption of electric vehicles (EVs), there is a need to revisit the problem and adapt it to accommodate the unique characteristics and requirements of EVs.  

The VSP can be classified into two main types based on the number of depots involved: single-depot VSP (SDVSP) and multi-depot VSP (MDVSP). In this study, we consider the single depot electric VSP (SDEVSP). We develop an optimization-based scheduling framework using mixed-integer linear programming (MILP) that can generate bus itineraries (hereafter called \emph{runs}) with given trips. The framework ensures that each trip can be successfully completed using DVs or EVs, and schedules recharging (when necessary) either between blocks or during off-hours. Moreover, we consider next day operability, that is bus runs are created in a way to allow a bus to serve runs in the upcoming planning horizons (often measured in days).

The SDEVSP is known to be NP-hard due to the presence of time or distance limitations, as demonstrated by \citet{bodin1983routing}. This implies that finding optimal solutions to large-scale problems is not computationally feasible within a reasonable time. Yet, many problem instances in urban areas are large-scale. To address this challenge, we propose a two-step solution approach for the SDEVSP. In the first step, solving an integer programming (IP) model, we generate blocks using the SDVSP model presented in \citet{cokyasar2023electric}. Each block is defined as a sequence of consecutive trips and has a designated depot as its starting and ending location. While generating these blocks, control parameters are used in the SDVSP modeling to obtain blocks with shorter than EV range. Since the SDVSP with time or distance constraints is NP-hard, we do not impose hard constraints on block length or time. Yet, the approach outlined in \cref{methodology} provides blocks complying by electrification constraints. Once the blocks are generated, the subsequent step involves chaining them together to form a bus run. This process entails solving the block chaining problem (BCP) using the blocks obtained in the first step. The goal of the BCP is to optimize the combinations of blocks in a way that enables the vehicle to recharge at the depot between two consecutive blocks and after the final block. Additionally, the generated bus runs must satisfy the next day operability constraints, ensuring continuity of operations.

The motivation behind this study is threefold. First, our approach builds upon the widely adopted SDVSP modeling used by transit agencies to create schedules for conventional DVs. By leveraging this well-established method, we facilitate the adoption and implementation of our proposed solution framework, enabling transit agencies to seamlessly transition to electric bus fleets. Transit agencies acknowledge the necessity of creating shorter blocks to electrify bus fleets, and our approach allows for creating these blocks to form efficient electric vehicle (EV) schedules. Second, the SDEVSP is recognized as an NP-hard problem, making it analytically challenging to solve at a large-scale. By breaking down the SDEVSP into the SDVSP and the BCP, we effectively manage the challenges associated with large-scale instances of the SDEVSP. Third, our model incorporates next day operability constraints. This consideration ensures that the scheduling of bus blocks allows for their repetition on the following day, promoting efficient and reliable schedules. Our study considerably advances the SDEVSP literature by providing these contributions. Our practical and scalable solution approach enhances the feasibility and effectiveness of electric bus scheduling, supporting the transition towards more sustainable and environmentally friendlier public transportation systems.

In \cref{lit_rev}, we begin with providing a literature review on the SDEVSP. \cref{methodology} formally describes the problem, the next day operability constraints, and the formulation of the MILP model. \cref{heuristic} outlines a heuristic approach to address the scalability concern in the BCP model. In \cref{num_exp}, we detail the experimental design and the parametric choices, and demonstrate the results of numerical experiments conducted to evaluate the performance of the proposed solution approaches. Finally, \cref{conclusion} concludes the study by summarizing the key findings and discussing potential future research directions.

\section{Literature review}\label[sec]{lit_rev}
Transit service design can be summarized as a sequence of five systematic decisions: Network design, frequency setting, timetabling, vehicle scheduling, and crew scheduling \citep{CEDER1986331}. While many studies in the literature focus on solving these problems separately, some select a subset and solve that selection jointly. See \citet{GUIHAIRE20081251} for a thorough review on these problems. This study solely focuses on the vehicle scheduling problem, i.e. the route alignments, frequencies, and timetables are given and fixed. Similarly, crew scheduling that is solved either after or jointly with vehicle scheduling is also beyond the scope of this study. \cref{literature_tab} gives an overview of the existing relevant literature on the EVSP. 

The existing literature on the electric VSP (EVSP) can be viewed in two main categories based on the number of depots included: SDEVSP and multi-depot EVSP (MDEVSP). While both variants are significant, recent studies have shown a growing interest in the MDEVSP \citep{adler2017vehicle, DIEFENBACH2023828, Li2020, LIU2020118, WEN201673, WU2022322, YAO2020101862, Zhang_Li_Tu_Dong_Chen_Gao_Liu_2021}. For instance, \citet{WU2022322} proposed a branch-and-price method for addressing the MDEVSP, incorporating time-of-use electricity tariffs and peak load risk. Similarly, \citet{DIEFENBACH2023828} employed a branch-and-check method, considering non-linear charging and partial charging to minimize the electric vehicle fleet size in the MDEVSP context. However, in our research, we specifically concentrate on the single depot aspect of the EVSP. This decision is motivated by our understanding of the needs and requirements of large-scale transit agencies. Those agencies that operate out of multiple depots already have their blocks and runs assigned to certain depots by either solving an MDVSP, or by pre-assigning routes or trips to certain depots and solving multiple SDVSPs. In this study, we treat the existing assignment of trips to depots as initial conditions. By focusing on the SDEVSP, we aim to provide practical and applicable solutions that align with the operational context of these agencies. While the MDEVSP is undoubtedly an important area of research, addressing the complexities associated with multiple depots falls beyond the scope and considerations of our study. 

\begin{table}[!htb]
  \scriptsize
  \setlength\extrarowheight{4pt}
  \caption{Summary of the existing relevant literature.}\label[tab]{literature_tab}
  \resizebox{\textwidth}{!}{\begin{tabular}{>{\raggedright}p{0.15\linewidth} p{0.28\linewidth}lcccccccccc}
  \toprule
  Study & Objective & Model & (i) & (ii) & (iii) & (iv) & (v) & (vi) & (vii) & (viii) & (ix) & (x)\\
  \midrule

    \citet{ALWESABI2020118855} & Minimize the battery cost and charging infrastructure costs. & MIQCP & - & - & - & - & - & \checkmark & - & - & -& -\\
   
    \citet{CHAO20132725} & Minimize the capital investment for the electric fleet and the total charging demand. & MILP & - & - &  - & - & - & - & - & - & -& -\\

    \citet{DIEFENBACH2023828} & Minimize the number of vehicles. & MILP & \checkmark & - & - & - & - & - & - & \checkmark & \checkmark & -\\

    \citet{Li2020} & Minimize the total cost of constructing and operating the electric bus system. & MILP & \checkmark & - & - & - & - & - & - & \checkmark & - & -\\
    
    \citet{LIU2020118} & Minimize the number of vehicles. & IP & \checkmark & - & - & - & - & - & - & \checkmark & \checkmark & -\\

    \citet{PERUMAL2021105268}& Minimize the investment costs for vehicles and operational costs. & MILP & - & - & \checkmark & - & - & - & - & - & - & -\\ 
    
    \citet{RINALDI2020102070} & Minimize the total operational cost. & MILP & - & - & - & \checkmark & - & - & \checkmark & - & -& -\\
    
    \citet{SISTIG2023120915} & Minimize the investment costs for vehicles and operational costs. & MILP & - & - & \checkmark & - & - & - & - & - & - & - \\   

    \citet{WEN201673} & Minimize the number of buses and the total traveling distance. & MILP & \checkmark & - & - & - & - & - & - & \checkmark & -& -\\
    
    \citet{WU2022322} & Minimize the total operation cost. & MILP & \checkmark & - & - & - & \checkmark & - & - & - & - & -\\

    \citet{XU2023104057} & Maximize the difference between the profit from the bus fare and the operational cost. & IP & - & \checkmark & - & - & - & - & - & - & - & -\\

    \citet{YAO2020101862}& Minimize the vehicle purchasing cost and operation cost. & IP & \checkmark & - & \checkmark & - & - & - & - & \checkmark & - & -\\

    \citet{Zhang_Li_Tu_Dong_Chen_Gao_Liu_2021} &  Minimize the vehicle purchasing cost and operation cost. & MILP & \checkmark & - & - & \checkmark & - & - & - & \checkmark & \checkmark & -\\

    This study & Minimize the number of vehicles and deadheading time. & MILP & - & - & - & - & - & - & - & \checkmark & - & \checkmark\\
  \bottomrule
  \end{tabular}}
(i) Multiple depots,
(ii) Timetabling,
(iii) Crew scheduling,
(iv) Mixed fleet,
(v) Power grid,
(vi) Placement of charging infrastructure,
(vii) Number of chargers,
(viii) Partial charging,
(ix) Non-linear charging,
(x) Operational continuity,
MIQCP: Mixed-integer quadratically-constrained program.
\end{table}

The SDEVSP has received limited attention in the existing literature, with a few studies dedicated to exploring its various aspects \citep{ALWESABI2020118855, CHAO20132725, XU2023104057, PERUMAL2022395, RINALDI2020102070, SISTIG2023120915}. For instance, \citet{XU2023104057} focused on jointly solving the electric bus timetabling and scheduling problem. They tackled this problem by employing the Lagrangian relaxation heuristic method as their solution approach. It should be noted that including timetabling introduced scalability challenges to their solution method. In our study, timetables are given and fixed, and we ensure that all the revenue trips are served by a vehicle. This deliberate choice ensures that our model is applicable to large-scale problems and can be effectively solved. \citet{li2014transit} studied the fast charging (or battery swapping) in the SDEVSP context. The study considered limited charger capacity at the depot resulting in an NP-hard problem. Eventually, heuristics were developed to solve the problem. As the difficulty of the problem is acknowledged in this study, we solve the problem in two steps. Another related study conducted by \citet{SISTIG2023120915} explored the integrated problem of electric vehicle and crew scheduling. To solve this problem, they employed a metaheuristic based on adaptive large neighborhood search (ALNS). Similarly, \citet{PERUMAL2021105268} also addressed the integrated electric vehicle and crew scheduling problem and utilized an ALNS as their solution approach. Our study does not consider the crew scheduling but considers the operational continuity. By focusing on operational continuity, our research aims to contribute to the field of sustainable electric vehicle scheduling. We recognize the importance of maintaining a consistent and efficient electric vehicle fleet, thereby enabling smoother and more reliable transportation services. \citet{abdelwahed2020evaluating} proposed MILPs to model the problem considering time-dependent electric prices and minimizing the impact on grid. While the grid impact is especially vital, we do not consider it in this study for simplicity.

\section{Methodology}\label[sec]{methodology}

In this section, we provide a formal description of the SDVSP model as presented by \cite{cokyasar2023electric}. We describe how the SDVSP solution method can be used to solve the SDEVSP, and introduce an MILP model to solve the BCP. To ease reading, we adopt a specific notation convention where calligraphic letters denote sets, uppercase Roman letters represent parameters, lowercase Roman letters represent variables and indices, and lowercase Greek letters as superscripts modify parameters and variables.

\subsection{Single Depot Vehicle Scheduling Problem (SDVSP)}\label[sec]{SDVSP}

Let $\sT$ represent set of timetabled bus trips, which are the movements of a bus to serve customers with known origin $O_i$ (first stop) and destination $D_i$ (last stop). The tuple set $\sL$ denotes all feasible arcs that connect bus trips, allowing them to be performed sequentially. Additionally, $\sR = \sL \bigcup \left(s \times \sT\right)\bigcup \left(\sT \times t\right)$ denotes set of all feasible arcs, where $s$ and $t$ indices denote the depot buses are dispatched from and return to, respectively. We denote the \emph{deadheading} time by $T^\tau_{ij}$, that is the travel time from the last stop $D_i$ of trip $i \in \sT$ or from the depot $s$ to the first stop $O_j$ of trip $j \in \sT$ or to the depot $t$. The idle time spent between two consecutive trips is called \emph{layover time}, denoted by $T^\lambda_{ij}$. Note that the layover time does not include the deadheading time but is the time spent after a bus finishes deadheading to the first stop $O_j$ of trip $j$ until the beginning of trip $j$. The block generation cost in time units is defined by $K$, and a unitless weight parameter $W$ adjusts the balance between vehicle costs and layover time. The binary decision variable $l_{ij} = 1$ represents whether trip $j \in \sT$ is served after trip $i \in \sT$, and 0 otherwise. \cref{SDVSP-sets-params-vars} denotes sets, parameters, and variables used in this section, and the mathematical model is in \labelcref{obj_fun_SDVSP}--\labelcref{chain_2_SDVSP}.

\begin{table}[!htb]
  \footnotesize
  \caption{Sets, parameters, and variables used in the SDVSP.}\label[tab]{SDVSP-sets-params-vars}
  \begin{tabularx}{\linewidth}{lX}
  \toprule
    \textbf{Set} & \textbf{Definition}\\
    \midrule
    $\sL$ & set of arcs connecting two consecutive trips\\
    $\sR$ & set of all feasible arcs connecting two consecutive trips, $\sR = \sL \bigcup \left(s \times \sT\right)\bigcup \left(\sT \times t\right)$, where $s$ and $t$ indices denote the depot buses are dispatched from and return to, respectively\\
    $\sT$ & set of timetabled bus trips\\
    \midrule
    \textbf{Parameter} & \textbf{Definition}\\
    \midrule
    $D_i$ & last stop of trip $i \in \sT$\\
    $K$ & a big number representing the block generation cost in time units\\
    $O_i$ & first stop of trip $i \in \sT$\\
    $T_i^\omicron$ & start time of trip $i\in \sT$\\
    $T_i^\rho$ & end time of trip $i\in \sT$\\
    $T_{ij}^\tau$ & deadheading time, the travel time from the last stop $D_i$ of trip $i \in \sT$ to the first stop $O_j$ of trip $j \in \sT$\\
    $T_{ij}^\lambda$ & layover time, the idle time spent between two consecutive trips $i \in \sT$ and $j \in \sT$ at the first stop $O_j$ of trip $j \in \sT$\\
    $W$ & weight factor for layover time between two consecutive trips\\
    \midrule
    \textbf{Variable} & \textbf{Definition}\\
    \midrule
    $l_{ij}$ & $\begin{cases}
          1 & \text{if trip $j \in \sT$ is served after trip $i \in \sT$, $i \neq j$}\\
          0 & \text{otherwise}\\
          \end{cases}$\\
    \bottomrule
  \end{tabularx}
\end{table}

\begin{equation}\label[objfunc]{obj_fun_SDVSP}
    \min \sum_{(i,j)\in\sL} (T^\tau_{D_i O_j} + W T^\lambda_{ij})l_{ij} + \sum_{j\in\sT}\Big(K+T^\tau_{s O_j}\Big)l_{sj} + \sum_{i\in\sT}T^\tau_{D_i t}l_{it}
\end{equation}
\noindent subject to,
\begin{equation}\label[consset]{chain_1_SDVSP}
    \sum_{j:\left(i,j\right)\in \sR}l_{ij} = 1 \qquad \forall i\in\sT
\end{equation}
\begin{equation}\label[consset]{chain_2_SDVSP}
    \sum_{i:\left(i,j\right)\in \sR}l_{ij} = 1 \qquad \forall j\in\sT
\end{equation}
\begin{equation*}\label[consset]{non_neg_SDVSP}
    \nonumber l_{ij} \in \{0,1\} \qquad \forall \left(i,j\right)\in \sR
\end{equation*}

The objective function \labelcref{obj_fun_SDVSP} is to minimize the weighted summation of the total non-revenue time (deadheading and weighted layover times) and the fleet size by adding an artificial time $K$ to depot-to-trip travels. Constraints \labelcref{chain_1_SDVSP} and \labelcref{chain_2_SDVSP} guarantee that each trip follows exactly one preceding trip or a depot trip and is subsequently followed by exactly one subsequent trip or a depot trip. Adjusting the parameters $K$ and $W$ affect the block length. Increasing the value of $K$ leads to longer blocks as the block generation cost becomes more significant, while increasing $W$ results in shorter blocks since the importance of layover time increases in relation to deadheading time and block generation cost.

\subsection{Block Chaining Problem (BCP)}\label[sec]{BCP}
The BCP is to find the optimal combination of bus blocks to be served consecutively by EVs that minimize the total \emph{depot layover time} and the number of EVs, while making use of the depot layover time between blocks for recharging. We assume that each depot is sufficiently large to accommodate new buses and the charging equipment. Moreover, there are as many slow and fast chargers as needed, resulting in zero waiting times for recharging. Note that the depot layover time in this section is different from the layover time in the previous subsection. While layover time in SDVSP is the time spent at a trip origin until the start of a trip, the depot layover time is the time spent at the depot between two blocks of a given vehicle. To solve the BCP, it is necessary to satisfy certain constraints related to the blocks. The blocks must adhere to EV range constraints, ensuring that the distance or time of each block does not exceed the EV's range. This is facilitated by adjusting the parameters $K$ and $W$ described previously. As mentioned in \cref{intro}, this parametric approach does not guarantee that all blocks are within the EV range since we do not have hard constraints. However, an \emph{acceptable} or \emph{targeted} share of within-range blocks can be obtained using this soft approach. Temporal conditions between blocks must also be met. For instance, the start time of the succeeding block should be later than the end time of the preceding block. Now, we formally describe an MILP formulation to solve the BCP, building upon the block results obtained from solving the SDVSP.

The set of blocks that can be run by EVs (i.e., blocks meeting the range constraints) is denoted by $\sB$. The tuple set $\sE$ denotes all feasible arcs that connect bus blocks within the planning horizon. The tuple set $\sC$ denotes all feasible arcs that connect bus blocks of consecutive horizons, that is each pair consists of a bus block from the current planning horizon and a bus block from the next planning horizon, and they can be combined in a sequential order. Furthermore, $\sA = \sE \bigcup \left(s \times \sB\right)\bigcup \left(\sB \times t\right)$ denotes set of all feasible arcs within a given horizon. Similar to the SDVSP, the indices $s$ and $t$ indicate the depot from which buses are dispatched and the depot to which they return, respectively. \cref{sets_params} provides sets and parameters used in the MILP to solve the BCP.

\begin{table}[!htb]
  \footnotesize
  \caption{Sets and parameters used in the MILP.}\label[tab]{sets_params}
  \begin{tabularx}{\linewidth}{lX}
  \toprule
    \textbf{Set} & \textbf{Definition}\\
    \midrule
    $\sA$ & set of all feasible arcs connecting two consecutive blocks within the horizon, $\sA = \sE \bigcup \left(s \times \sB\right)\bigcup \left(\sB \times t\right)$, where $s$ and $t$ indices denote the depot buses are dispatched from and return to, respectively\\
    $\sB$ & set of timetabled bus blocks\\
    $\sC$ & set of arcs connecting two consecutive blocks over night $i$ (first, current horizon) and $j$ (second, next horizon), $\sC = \Big\{\left(i, j\right) | i, j \in \sB \land L \leq \left(\overline{T} + T_j^\alpha-T_i^\beta\right) \leq U \Big\}$\\
    $\sE$ & set of arcs connecting two consecutive blocks $i$ (first, current horizon) and $j$ (second, current horizon), $\sE = \Big\{\left(i, j\right) | i, j \in \sB \land i \neq j \land L \leq \left(T_j^\alpha-T_i^\beta\right) \leq U \Big\}$\\
    \midrule
    \textbf{Parameter} & \textbf{Definition}\\
    \midrule
    $\overline{B}$ & battery capacity measured in time units\\
    $B_i$ & energy consumption of block $i\in\sB\cup \{s\}$ measured in time units, and $B_s=0$\\
    $K^\prime$ & a big number representing the vehicle cost measured in time units\\
    $L$ & minimum admitted recharging time between two consecutive blocks\\
    $\bigM_1$ & big number, that is $\bigM_1 > \overline{B}+ \max\big\{{(\overline{T}+\max_{i\in \sB}T_i^\alpha) R^\nu, \max_{i\in \sB}R^\delta T_i^\alpha}\big\}$\\
    $\bigM_2$ & big number, that is $\bigM_2 \geq \overline{B}+ 2\bigM_1$ \\
    $R^\delta$ & rate of recharge during day, i.e., energy (in time units) gained by recharging in one unit of time, e.g., $R^\delta$ minutes of driving range is gained by recharging a bus for one minute\\
    $R^\nu$ & rate of recharge during night, i.e., energy (in time units) gained by recharging in one unit of time, e.g., $R^\nu$ minutes of driving range is gained by recharging a bus for one minute\\
    $\overline{T}$ & end of planning horizon in time units\\
    $T_i^\alpha$ & start time of block $i\in \sB$\\
    $T_i^\beta$ & end time of block $i\in \sB$\\
    $U$ & maximum admitted recharging time between two consecutive blocks\\
    $W^\prime$ & weight factor for recharging time between two consecutive blocks\\
    \bottomrule
  \end{tabularx}
\end{table}

The energy consumption to run block $i \in \sB$, measured in time units, is denoted by $B_i$. This energy consumption is assumed to be a linear function of the travel time for the block. The start and end times of block $i \in \sB$, which are obtained by solving the SDVSP, are denoted by $T_i^\alpha$ and $T_i^\beta$, respectively. A recharging between consecutive blocks in the same planning horizon is considered to occur during the day, while recharging between consecutive blocks, one in the current and the other in the next planning horizon, is assumed to be overnight. The rate of recharge during the day is denoted by $R^\delta$, while the rate of recharge overnight is denoted by $R^\nu$. These recharge rates represent the amount of energy (measured in time units) gained by recharging for one unit of time. The battery capacity, which corresponds to the EV range and is measured in time units, is denoted by $\overline{B}$. This parameter represents the maximum amount of energy that the EV's battery can store, determining the maximum duration the vehicle can travel without recharging. The end time of the planning horizon, denoted by $\overline{T}$, establishes the time limit or deadline for the scheduling of blocks. This parameter sets the boundary for the scheduling process, ensuring that all blocks are scheduled within the specified time frame. To control the layover time between consecutive blocks, we introduce the maximum and minimum layover time limits that are denoted by $U$ and $L$, respectively. Weight parameters $K^\prime$ (in time units) and $W^\prime$ (unitless) represent the vehicle cost and importance of layover time against fleet size in the objective function, respectively. Lastly, $\bigM_1$ and $\bigM_2$ are adequately big numbers, where $\bigM_1 > \overline{B}+ \max\big\{{(\overline{T}+\max_{i\in \sB}T_i^\alpha) R^\nu, \max_{i\in \sB}R^\delta T_i^\alpha}\big\}$ and $\bigM_2 \geq \overline{B}+ 2\bigM_1$. 

Binary decision variable $y_{ij}=1$ if block $j \in \sB$ is served after block $i \in \sB$, and $y_{ij}=0$, otherwise. Binary decision variable $z_{ij}$ takes a value of 1 if block $j \in \sB$ on the next day can be served after block $i \in \sB$ in the current day, and 0 otherwise. Decision variable $v_{ij} \in \mathbb{R}_{\geq 0}$ represents the state of charge (SOC) in time units at the beginning of block $j \in \sB$ after serving block $i \in \sB$.
Decision variable $v_{ij}^\prime \in \mathbb{R}$ represents the SOC in time units at the beginning of block $j \in \sB$ on the next planning horizon after serving block $i \in \sB$ in the current horizon. Decision variable $b_i \in \mathbb{R}_{\geq 0}$ denotes the SOC in time units at the beginning of block $i \in \sB$. Decision variable $u_{ij} \in \mathbb{R}_{\geq 0}$ represents the amount of energy gained measured in time units during the layover time between two consecutive blocks. Additionally, we introduce two auxiliary binary variables: $x_{ij}$ and $n_{ij}$. These variables are used to linearize the max and min functions, respectively. \cref{vars} provides variables and variable definitions used in the MILP. The mathematical model to solve the BCP is as follows:

\begin{table}[!htb]
  \footnotesize
  \caption{Variables used in the MILP.}\label[tab]{vars}
  \begin{tabularx}{\linewidth}{lX}
  \toprule
    \textbf{Variable} & \textbf{Definition}\\
    \midrule
    $b_{i}$ & state of charge at the beginning of block $i\in\sB\cup\{s\}$ measured in time units, $b_{i}\in\mathbb{R}_{\geq 0}$\\
    $n_{ij}$ & auxiliary binary variable used to linearize the min function, $(i,j) \in \sC$\\
    $u_{ij}$ & energy gained between blocks $i$ and $j$ measured in time units on current horizon, $u_{ij}\in\mathbb{R}_{\geq 0},~(i,j) \in \sA$\\
    $v_{ij}$ & state of charge at the beginning of block $j$ on current horizon after serving block $i$ on current horizon measured in time units, $v_{ij}\in\mathbb{R}_{\geq 0},~(i,j) \in \sA$\\
    $v_{ij}^\prime$ & state of charge at the beginning of block $j$ on next horizon after serving block $i$ on current horizon measured in time units, $v_{ij}^\prime\in\mathbb{R},~(i,j) \in \sC$\\
    $x_{ij}$ & auxiliary binary variable used to linearize the max function, $(i,j) \in \sA$\\
    $y_{ij}$ & $\begin{cases}
          1 & \text{if block $j$ on current horizon is served after block $i$ on current horizon, $(i,j) \in \sA$}\\
          0 & \text{otherwise}\\
          \end{cases}$\\
    $z_{ij}$ & $\begin{cases}
          1 & \text{if block $j$ on next horizon can be served after block $i$ on current horizon, $(i,j) \in \sC$}\\
          0 & \text{otherwise}\\
          \end{cases}$\\
    \bottomrule
  \end{tabularx}
\end{table}

\begin{equation}\label[objfunc]{obj_fun}
    \min \sum_{\substack{{\left(i,j\right)}\in \sE}} W^\prime\left(T_j^\alpha-T_i^\beta\right)y_{ij}+\sum_{i\in\sB}K^\prime y_{si}
\end{equation}
\noindent subject to,
\begin{equation}\label[consset]{chain_1}
    \sum_{j:\left(i,j\right)\in \sA}y_{ij} = 1 \qquad \forall i\in\sB
\end{equation}
\begin{equation}\label[consset]{chain_2}
    \sum_{i:\left(i,j\right)\in \sA}y_{ij} = 1 \qquad \forall j\in\sB
\end{equation}
\begin{equation}\label[cons]{battery_state_from_i_to_j}
    v_{ij} = \max \Big\{b_i - B_i - \bigM_1\left(1-y_{ij}\right)+u_{ij}, 0\Big\} \qquad \forall (i,j) \in \sA
\end{equation}
\begin{equation}\label[cons]{battery_state_before_j}
    b_{j} = \sum_{i:\left(i,j\right)\in \sA}v_{ij} \qquad \forall j\in\sB
\end{equation}
\begin{equation}\label[cons]{battery_bounds}
    B_i \leq b_i \leq \overline{B} \qquad \forall i\in \sB
\end{equation}
\begin{equation}\label[cons]{charging_time_limit}
    u_{ij}\leq \left(T_j^\alpha-T_i^\beta\right)R^{\delta}y_{ij} \qquad \forall {\left(i,j\right)}\in\sE
\end{equation}
\begin{equation}\label[cons]{charging_time_before_depot}
    u_{it} = 0 \qquad \forall i\in\sB
\end{equation}
\begin{equation}\label[cons]{battery_state_from_overnight_i_to_j}
    v_{ij}^\prime = \min \Big\{\overline{B}, v_{it}+\left(\overline{T} + T_j^\alpha-T_i^\beta\right)R^{\nu} - \bigM_1\left(2-y_{sj}-y_{it}\right)\Big\} \qquad \forall \left(i,j\right)\in \sC
\end{equation}
\begin{equation}\label[cons]{overnight_connection_1}
    v_{ij}^\prime \geq b_j -\bigM_2 \left(1-z_{ij}\right) \qquad \forall \left(i,j\right)\in \sC
\end{equation}
\begin{equation}\label[cons]{continuation_possibility_1}
    \sum_{i:\left(i,j\right)\in \sC}z_{ij} = y_{sj} \qquad \forall j\in \sB 
\end{equation}
\begin{equation}\label[cons]{continuation_possibility_2}
    \sum_{j:\left(i,j\right)\in \sC}z_{ij} = y_{it} \qquad \forall i\in \sB 
\end{equation}
\begin{equation*}\label[consset]{non-neg}
    \nonumber y_{ij}, ~z_{ij} \in \{0,1\},  b_{i}, ~u_{ij}, ~v_{ij}\in \mathbb{R}_{\geq 0}, v_{ij}^\prime\in \mathbb{R}.
\end{equation*}

The objective function \labelcref{obj_fun} minimizes the total layover time between blocks and the number of vehicles. Constraints \labelcref{chain_1} and \labelcref{chain_2} guarantee that each block follows exactly one preceding block or a depot block and is followed by exactly one subsequent block or a depot block, respectively. Constraints \labelcref{battery_state_from_i_to_j} and \labelcref{battery_state_before_j} determine the SOC at the start of block $j \in \sB$ based on the SOC at the beginning of the preceding block $i \in \sB$, the energy consumption during block $i$, and the energy gained between the blocks $i$ and $j$. Note that $v_{sj}=b_s+u_{sj}$ when $y_{sj}=1$ (that is for each run) in \labelcref{battery_state_from_i_to_j}. Then, we know from \labelcref{battery_state_before_j} that $b_j=b_s+u_{sj}$. Therefore, initial SOC of each run can vary by artificially charging from $s$ to the first block in the current horizon, and we can ensure that the initial SOC for each run is a variable. Constraints \labelcref{battery_bounds} ensure that the SOC does not exceed the maximum battery capacity, and  EVs have adequate SOC to complete each block $i\in\sB$ without running out of energy. Constraints \labelcref{charging_time_limit} guarantee that the energy gained between two consecutive blocks does not exceed the maximum amount of energy that can be gained during the layover time between those blocks. Constraints \labelcref{charging_time_before_depot} enforce that no daytime charging takes place if block $i$ is the last block of the horizon. 

The set of constraints \labelcref{battery_state_from_overnight_i_to_j} - \labelcref{continuation_possibility_2} account for the feasibility of the next horizon's operations. Constraints \labelcref{battery_state_from_overnight_i_to_j} calculate the SOC at the beginning of block $j \in \sB$ on the next horizon, after serving block $i \in \sB$ on the current horizon. This calculation takes into account the SOC at the end of block $i$ and considers the amount of energy gained between blocks $i$ and $j$, where $\left(i, j\right) \in \sC$. To establish the connection between block $j \in \sB$ on the next horizon and block $i \in \sB$ on the current horizon, we introduce constraint \labelcref{overnight_connection_1}. This constraint ensures that if $z_{ij}=1$, the SOC $v_{ij}^\prime$ at the beginning of block $j$ is sufficient to serve that block. Constraints \labelcref{continuation_possibility_1} ensure that each block $j$ that start from the depot on the next horizon is preceded only by one block $i$. Similarly, constraints \labelcref{continuation_possibility_2} ensure that each block $i$ that ends at the depot on the current horizon is succeeded by only one block in the next horizon.

Constraints \labelcref{battery_state_from_i_to_j} and \labelcref{battery_state_from_overnight_i_to_j} in their current form involve min and max functions with variables, which is quite straightforward to deal with by many commercial solvers without the need for linearization. However, it can still be useful to remove the non-linearity to possibly accelerate the solution. To this end, we replace constraints \labelcref{battery_state_from_i_to_j} with the set of constraints \labelcref{battery_state_from_i_to_j_l1} - \labelcref{battery_state_from_i_to_j_l3} and constraints \labelcref{battery_state_from_overnight_i_to_j} with the set of constraints \labelcref{battery_state_from_overnight_i_to_j_l1} - \labelcref{battery_state_from_overnight_i_to_j_l4}. 

\begin{equation}\label[cons]{battery_state_from_i_to_j_l1}
    v_{ij} \geq b_i - B_i + u_{ij} - \bigM_1\left(1-y_{ij}\right) \qquad \forall (i,j)\in \sA
\end{equation}
\begin{equation}\label[cons]{battery_state_from_i_to_j_l2}
    v_{ij} \leq b_i - B_i + u_{ij} - \bigM_1\left(1-y_{ij}-x_{ij}\right)\qquad \forall (i,j)\in \sA
\end{equation}
\begin{equation}\label[cons]{battery_state_from_i_to_j_l3}
    v_{ij} \leq \bigM_1 \left(1-x_{ij}\right) \qquad \forall (i,j)\in \sA
\end{equation}
\begin{equation}\label[cons]{battery_state_from_overnight_i_to_j_l1}
    v_{ij}^\prime \leq \overline{B} \qquad \forall \left(i,j\right)\in \sC
\end{equation}
\begin{equation}\label[cons]{battery_state_from_overnight_i_to_j_l2}
    v_{ij}^\prime \leq v_{it}+\left(\overline{T} + T_j^\alpha-T_i^\beta\right)R^{\nu} - \bigM_1\left(2-y_{sj}-y_{it}\right) \qquad \forall \left(i,j\right)\in \sC
\end{equation}
\begin{equation}\label[cons]{battery_state_from_overnight_i_to_j_l3}
    v_{ij}^\prime \geq \overline{B} - \bigM_2 n_{ij} \qquad \forall \left(i,j\right)\in \sC
\end{equation}
\begin{equation}\label[cons]{battery_state_from_overnight_i_to_j_l4}
    v_{ij}^\prime \geq v_{it}+\left(\overline{T} + T_j^\alpha - T_i^\beta\right)R^{\nu} - \bigM_1\left(3-y_{sj}-y_{it}-n_{ij}\right) \qquad \forall \left(i,j\right)\in \sC
\end{equation}
\begin{equation*}\label[consset]{non-neg2}
    \nonumber x_{ij}, ~y_{ij}, ~n_{ij} \in \{0,1\},  b_{i}, ~u_{ij}, ~v_{ij}\in \mathbb{R}_{\geq 0}, v_{ij}^\prime\in \mathbb{R}.
\end{equation*}

\section{Heuristic solution approaches}\label[sec]{heuristic}
The SDEVSP is a known NP-hard problem, presenting a computational challenge for finding an optimal solution. To tackle this problem, we propose a two-step heuristic solution approach. In the first step, our objective is to identify suitable values for $K$ and $W$ that enable the generation of blocks where the service time for each block does not exceed the vehicle range. We use the method proposed by \citet{cokyasarieee} to determine the suitable values for $K$ and $W$, then we use the resulting blocks generated in the previous step and solve the BCP. The BCP aims to generate an optimal schedule for electric vehicles based on the given blocks and their associated start and end times, taking into account charging requirements. Since the BCP is a variant of the SDVSP model with resource constraints, it is NP-hard. We introduce two solution algorithms to solve large-scale instances: A divide-and-conquer (DaC) algorithm and a greedy heuristic algorithm. In the following Sections \labelcref{sol_BCP_1} and \labelcref{sol_BCP_2}, we elaborate on these solution algorithms in detail. Additionally, we present a computational analysis in \cref{computational_performance}, where we evaluate and compare the performance of the DaC and the greedy heuristic algorithm with an MILP solver.

\subsection{Divide-and-conquer (DaC) algorithm }\label[sec]{sol_BCP_1}
Divide and conquer (DaC) is well-known algorithm \citep{alma99271453112005897}. The idea behind the DaC algorithm is to break down the large-scale BCP into smaller and manageable subproblems. The subproblems at adequately small size (e.g., 20 blocks) can be solved independently using commercial solvers. Once the subproblems are solved, the solutions are combined to form an overall solution for the master problem. This combination step ensures that the solution obtained is feasible as \cref{lemma} denotes.

\begin{lemma}\label[lem]{lemma}
Let $\mathcal{M}$ be the master problem and $\sP$ be a set of subproblems, that $\mathcal{M} = \bigcup_{p \in \sP}p$. If each subproblem $p \in \sP$ has a feasible solution $X_p$, then $X_\mathcal{M} = \bigcup_{p \in \sP}X_p$ is a feasible solution for $\mathcal{M}$.
\end{lemma}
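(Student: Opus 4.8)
The plan is to read \cref{lemma} at face value for the concrete decomposition underlying the DaC algorithm: a subproblem $p\in\sP$ is the BCP \labelcref{obj_fun}--\labelcref{continuation_possibility_2} posed on a block subset $\sB_p$, where $\{\sB_p\}_{p\in\sP}$ partitions $\sB$, with induced arc sets $\sA_p=\bigl(\sE\cap(\sB_p\times\sB_p)\bigr)\cup(s\times\sB_p)\cup(\sB_p\times t)$ and $\sC_p=\sC\cap(\sB_p\times\sB_p)$, and a feasible solution $X_p$ assigns every variable of $p$ so as to satisfy the model's constraints restricted to $p$. Under this reading, $\mathcal M=\bigcup_{p\in\sP}p$ means that every constraint of $\mathcal M$ indexed by a block, or by an arc lying inside some $\sA_p\cup\sC_p$, is literally a constraint of the unique subproblem containing it, and the only variables of $\mathcal M$ not belonging to any $p$ are those attached to \emph{cross arcs} --- arcs of $\sA$ or $\sC$ joining blocks in different groups.

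First I would define $X_\mathcal M$ explicitly: copy $X_p$ on every variable of $p$, and on each cross arc $(i,j)$ set $y_{ij}=z_{ij}=u_{ij}=v_{ij}=0$, set $x_{ij}=n_{ij}=1$, and set $v_{ij}'$ to the value that \labelcref{battery_state_from_overnight_i_to_j} forces given the already-fixed $y_{sj}$, $y_{it}$, $v_{it}$. Since $\{\sB_p\}$ is a partition no variable is assigned twice, so $X_\mathcal M$ is a well-defined complete assignment.

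Then I would verify feasibility constraint by constraint. Each conservation constraint \labelcref{chain_1}, \labelcref{chain_2}, \labelcref{continuation_possibility_1}, \labelcref{continuation_possibility_2} fixes a block, say $i\in\sB_p$, and its left-hand sum splits into the sum over arcs lying in $\sA_p$ or $\sC_p$ plus the sum over cross arcs at $i$; the first equals the right-hand side because $X_p$ solves $p$, and the second is $0$ by construction. Each constraint \labelcref{battery_state_from_i_to_j}--\labelcref{charging_time_before_depot} (and its linearization \labelcref{battery_state_from_i_to_j_l1}--\labelcref{battery_state_from_i_to_j_l3}) is attached to a single arc: on an intra-group arc it is one of $p$'s own constraints and holds; on a cross arc, $y_{ij}=0$ collapses it to a statement about $v_{ij}=u_{ij}=0$ that is consistent because $\bigM_1>\overline B\ge b_i-B_i$, using $B_i\le b_i\le\overline B$ from $X_p$. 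The constraints \labelcref{battery_state_from_overnight_i_to_j}, \labelcref{overnight_connection_1} and \labelcref{battery_state_from_overnight_i_to_j_l1}--\labelcref{battery_state_from_overnight_i_to_j_l4} behave the same way on intra-group arcs but need a closer look on cross arcs, handled below. Modulo that, assembling the families shows $X_\mathcal M$ is feasible for $\mathcal M$, which is exactly \cref{lemma}; the argument uses nothing about $\mathcal M$ beyond separability of its constraint system along the partition, with the extra cross-arc variables confined to constraints that can be met on their own.

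The main obstacle is the next-day block \labelcref{battery_state_from_overnight_i_to_j}--\labelcref{continuation_possibility_2} on cross arcs. Unlike $y$, $z$, $u$, $v$, the variable $v_{ij}'$ is free in $\mathbb R$ and is pinned by the $\min$ in \labelcref{battery_state_from_overnight_i_to_j} (equivalently by \labelcref{battery_state_from_overnight_i_to_j_l2} together with \labelcref{battery_state_from_overnight_i_to_j_l4}) to a definite value --- possibly negative --- once the fixed $y_{sj}$, $y_{it}$, $v_{it}$ are substituted. One must then check that \labelcref{overnight_connection_1} with $z_{ij}=0$, namely $v_{ij}'\ge b_j-\bigM_2$, still holds, and pick $n_{ij}$ so that \labelcref{battery_state_from_overnight_i_to_j_l3}--\labelcref{battery_state_from_overnight_i_to_j_l4} reproduce that $\min$. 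Casing on whether $y_{sj}+y_{it}$ is at most $1$ or equals $2$, and bounding each term by $\overline B$ (for $b_j$ and $v_{it}$) or by $(\overline T+\max_{i\in\sB}T_i^\alpha)R^\nu$ (for the overnight-charging term, nonnegative since $T_i^\beta\le\overline T$ for every block), shows it goes through --- and it does so precisely because $\bigM_2\ge\overline B+2\bigM_1$ and $\bigM_1>\overline B+\max\{(\overline T+\max_{i\in\sB}T_i^\alpha)R^\nu,\ \max_{i\in\sB}R^\delta T_i^\alpha\}$. Everything else --- the conservation split and the intra-group constraints --- is immediate once the partition is fixed.
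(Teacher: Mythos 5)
Your proposal is correct and rests on the same core observation as the paper's proof: the BCP's constraint system has no coupling across the block partition (no shared charger or depot capacity), so feasible subproblem solutions merge into a feasible master solution. The paper states this informally in three sentences, whereas you carry out the constraint-by-constraint verification — in particular the explicit zeroing of cross-arc variables and the big-$\bigM$ checks on \cref{overnight_connection_1} — which fills in details the published argument leaves implicit.
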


\begin{proof}[\textit{Proof of} \cref{lemma}]
If each subproblem $p \in \sP$ has a feasible solution $X_p$, then it means that blocks in $\sB_p$ form one or more feasible bus runs. Each run in a given subproblem $p \in \sP$ on the current horizon is connected to at least one run in the next horizon within the same subproblem $p \in \sP$. Since there are no constraints on charging capacity or garage space, merging the solutions $X_p$ into $X_\mathcal{M}$ is analogous to solving separate problems for separate garages.
\end{proof}

In order to break down the large-scale BCP into smaller subproblems, we employ the Kernighan-Lin (K-L) bisection algorithm, as introduced by \citet{Kernighan_Lin}. To apply the K-L bisection algorithm in the context of the BCP, we begin with representing the problem as a graph. Each block is represented as a vertex, based on the the set $\sB$, and the relationships between the blocks are captured as edges, based on the tuple set $\sE$. The K-L bisection algorithm then aims to partition this graph into two subgraphs with the goal of minimizing the number of edges between the partitions while ensuring approximately equal number of vertices in each partition. This partitioning is achieved through an iterative process that involves swapping vertices between the two partitions to maximize the reduction in the number of edges between them. Using this method, we attempt to minimize the optimality deviation caused by partitioning.

While the K-L algorithm is originally designed to partition a problem into two subproblems, we aim to divide the problem into a larger number of subproblems. To do this, we can repeat the K-L algorithm multiple times. In each iteration, the algorithm partitions a subproblem into two subproblems by dividing the corresponding graph representation. The first iteration applies the K-L algorithm to the original problem, resulting in two subproblems. Subsequent iterations apply the K-L algorithm to each subproblem from the previous iteration, dividing them into two subproblems each. 

Let $\norm{\mathcal{M}}$ represent the number of blocks in the master problem and $\norm{p}$ represent the maximum number of blocks that can be solved using commercial solvers within a reasonable timeframe. The target number of subproblems is then $m = \frac{\norm{\mathcal{M}}}{\norm{p}}$. This would require $n=\ceil{\log_2^m}$ iterations, and in each iteration $k=1,\ldots,n$ the number of partitionings is $2^{k-1}$ resulting in a total of $2^{n}-1$ partitionings. The final number of subproblems would then be $2^{n}$. Note that, ideally, we divide the problem into a reasonably large number of subproblems to fully exploit the potential for parallelism and solution efficiency. However, dividing the problem into a larger number of subproblems leads to a natural decrease in solution quality. Therefore, we carefully consider this trade-off to determine an appropriate value for $m$. 

\subsection{Greedy algorithm}\label[sec]{sol_BCP_2} 

The greedy algorithm in \cref{greedy_algo} iteratively assigns blocks to vehicles in a way that minimizes the number of vehicles needed. The algorithm follows a greedy strategy, making locally optimal decisions at each step. It is one of the traditional methods to solve scheduling problems and is similar to the \emph{earliest due date rule} presented in \citep[pp. 152]{sule2007production}. The algorithm begins with sorting the set of blocks $\sB$, based on their start times $T^\alpha_i$. We initialize various variables, including $b_i$, $u_{ij}$, and $u^\prime_{ij}$, to zero. Additionally, we set $b_{\sB(0)}=\overline{B}$.

Next, we define $b^\phi_i$ as the net energy consumption until the end of block $i$. It is computed by summing the consumption of all preceding blocks in the chain up to the previous block, and subtracting the sum of all charging values $u_{ij}$ that occurred between those blocks. All $b^\phi_i$ values are initially set to zero. To begin, the algorithm generates the first vehicle $\sV_0$ by adding the first block $\sB(0)$ to the itinerary. The net energy consumption for this vehicle is set to the consumption of the first block, $B_{\sB(0)}$. After removing the first block $\sB(0)$ from $\sB$, we initialize the vehicle counter, denoted as $v$, and the block counter, denoted as $k$, to zero.

The algorithm runs until all blocks are assigned to vehicles. Within the while loop, the first condition checks if a vehicle has at least one block already inserted. If not, the first block in the current set of blocks is inserted. For a given block $i$, which represents the last block in the current vehicle, and a block $j$ currently under consideration for insertion, temporal conditions are evaluated. These conditions compare the start time of block $j$ with the start time of block $i$ to ensure chronological feasibility for both during the current horizon and also for the next horizon. Temporary variables $u$, $b$, $u^\prime$, and $b^\phi$, are calculated. If the SOC conditions are met for these variables, indicating sufficient energy levels, the evaluated block is inserted into the current vehicle.

The first condition checks if the overnight charging $u^\prime$ is greater than or equal to the net energy consumption if the current block is inserted into the itinerary. This guarantees that there will be enough battery capacity for the next time period to continue the sequence of blocks. The second condition verifies that the SOC $b_i$ of the last block in the vehicle is greater than or equal to the energy consumption $B_i$ of that block. Lastly, the SOC level after completing block $i$ (i.e., $b_i - B_i$) and charging the vehicle with $u$ is checked to determine if it is sufficient to execute block $j$. If any of the SOC conditions are not satisfied, the block counter $k$ or the vehicle counter $v$ is incremented accordingly. When a block is successfully inserted, it is removed from the set of blocks $\sB$.

\begin{algorithm}[!htb]
\SetArgSty{textnormal}
\scriptsize
\SetKwInOut{Input}{Input}
\SetKwInOut{Output}{Output}
\Input{~ {$\sB$, $B_i$, $\overline{B}$, $\sC$, $R^\delta$, $R^\nu$, $T^\alpha_i$, $T^\beta_i$, $\overline{T}$}}
\Output{~ {$\sV$ \Comment{A set storing blocks of vehicles.}}}
\SetKwFunction{FGreedy}{\textproc{Greedy}}
  \SetKwProg{Fn}{Function}{:}{\KwRet {$\sV$}}
  \Fn{\FGreedy{}}
  {
\textproc{Sort}($\sB$, $T^\alpha$)\;

$b_i, b^\phi_i \gets 0~\forall i\in\sB$;
${u_{ij} \gets 0~\forall (i,j)\in\sA; u^\prime_{ij}} \gets 0~\forall (i,j)\in\sC$;
$b_{\sB(0)} \gets \overline{B}$;
$b^\phi_{0} \gets B_{\sB(0)}$\;

$\sV_0 \gets \{\}$\;
$\sV_0 \gets \sV_0 \cup \{\sB(0)\}$\;
$\sB \gets \sB \setminus \sB(0)$; \Comment{Delete first block.}\\
$v \gets 0$; \Comment{Index of vehicle id.}\\
$k \gets 0$; \Comment{Counter for block indices of $\sB$.}\\
\While{$\sB \neq \{\}$,}{
\uIf{$|\sV_v| \neq 0$,}{
$i \gets \sV_v(|\sV_v|)$; \Comment{Last block in $\sV_v$ is $i$.}\\
$j \gets \sB(k)$; \Comment{The $k^{\text{th}}$ block in $\sB$ is $j$.}\\
\uIf{$i\neq j$ and temporal conditions hold,}{

$u \gets \min\left(\overline{B}-b_i + B_i, (T_j^\alpha-T_i^\beta)R^{\delta}\right)$\;
$b \gets b_i - B_i + u$\;
$u^\prime \gets \min\left(\overline{B} - b + B_j, (\overline{T} + T^\alpha_{\sV_v(0)}-T_i^\beta)R^{\nu}\right)$\;
$b^\phi \gets b^\phi_v + B_j - u^\prime$\;
\uIf{$u^\prime \geq b^\phi \land b_i\geq B_i \land b\geq B_j,$}{
$\sV_v \gets \sV_v \cup \{j\}$; $u_{ij} \gets u$\;
$b_j \gets b$; $u^\prime_{ij} \gets u^\prime$; $b^\phi_v \gets b^\phi$\;
$\sB \gets \sB \setminus \{j\}$\;
}
\Else{
\uIf{$k+1 < |\sB|$,}{$k \gets k + 1$;}
\Else{$v \gets v + 1$; $\sV_v \gets \{\}$; $k \gets 0$;}
}
}
\Else{\uIf{$k+1 < |\sB|$,}{$k \gets k + 1$;}
\Else{$v \gets v + 1$; $\sV_v \gets \{\} $; $k \gets 0$;}}
}
\Else{
$\sV_v \gets \sV_v \cup \{\sB(0)\}$\;
$b_{\sB(0)}\gets \overline{B}$; $b^\phi_{v} \gets B_{\sB(0)}$;
$\sB \gets \sB \setminus \sB(0)$\;
}
}
$\sV \gets \bigcup_{v^\prime \in \{0,1,\ldots,v\}}\sV_{v^\prime}$\;
}
 \caption{Greedy algorithm pseudocode}\label[algo]{greedy_algo}
\end{algorithm}

\section{Numerical Experiments}\label[sec]{num_exp}
In this section, we provide an overview of our experimental design and data in \cref{Design of experiments}, compare the performance and limitations of the Greedy, DaC, and the MILP solver methods in \cref{computational_performance}, and conduct case studies in \cref{case_studies} to reveal key takeaways on large-scale, real-world transit services.

\subsection{Design of experiments}\label[sec]{Design of experiments}
We conducted numerical experiments in Austin, TX and the Chicago Metropolitan Area transit networks. We utilized Capital Metropolitan Transportation Authority (CapMetro) network for Austin, and the Chicago Transit Authority (CTA) and the PACE Suburban Bus networks for Chicago through General Transit Feed Specification (GTFS) data \citep{GTFS}. CapMetro operates 75 bus routes, and Chicago agencies collectively operate a total of 325 bus routes \citep{auld2016polaris}. The locations of these routes and depots
used by these agencies are indicated in \cref{fig:Map}. With the data provided, we identified 17 bus depots in Chicago and verified the number and locations through official websites of the agencies; however, we could locate four bus depots in Austin but could not verify neither the number nor the locations from other sources.
To obtain the necessary trip schedule data, we referred to the GTFS. For CTA trips, we utilized the route-to-depot mapping information available in \citep{Chicagobus} to assign trips to their respective depots. For CapMetro and PACE routes, we did not find any mapping information. Therefore, we computed the mid-point of each trip and assigned it to the closest depot. 

\begin{figure}
\centering
\begin{subfigure}{.5\textwidth}
  \centering
  \includegraphics[width=1\linewidth]{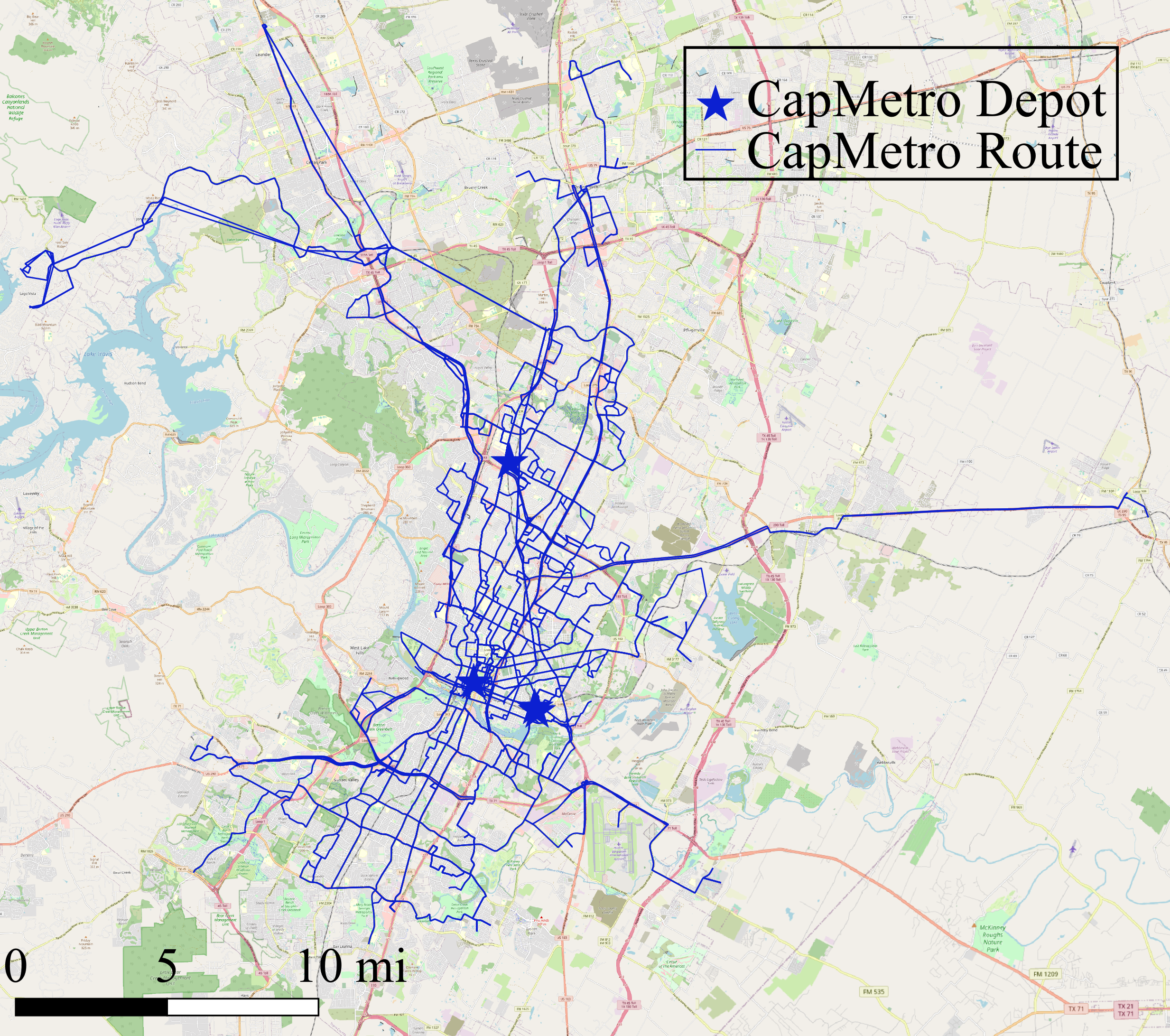}
  \caption{Austin, TX.}
  \label{fig:austin_map}
\end{subfigure}
\quad
\begin{subfigure}{.39\textwidth}
  \centering
  \includegraphics[width=1\linewidth]{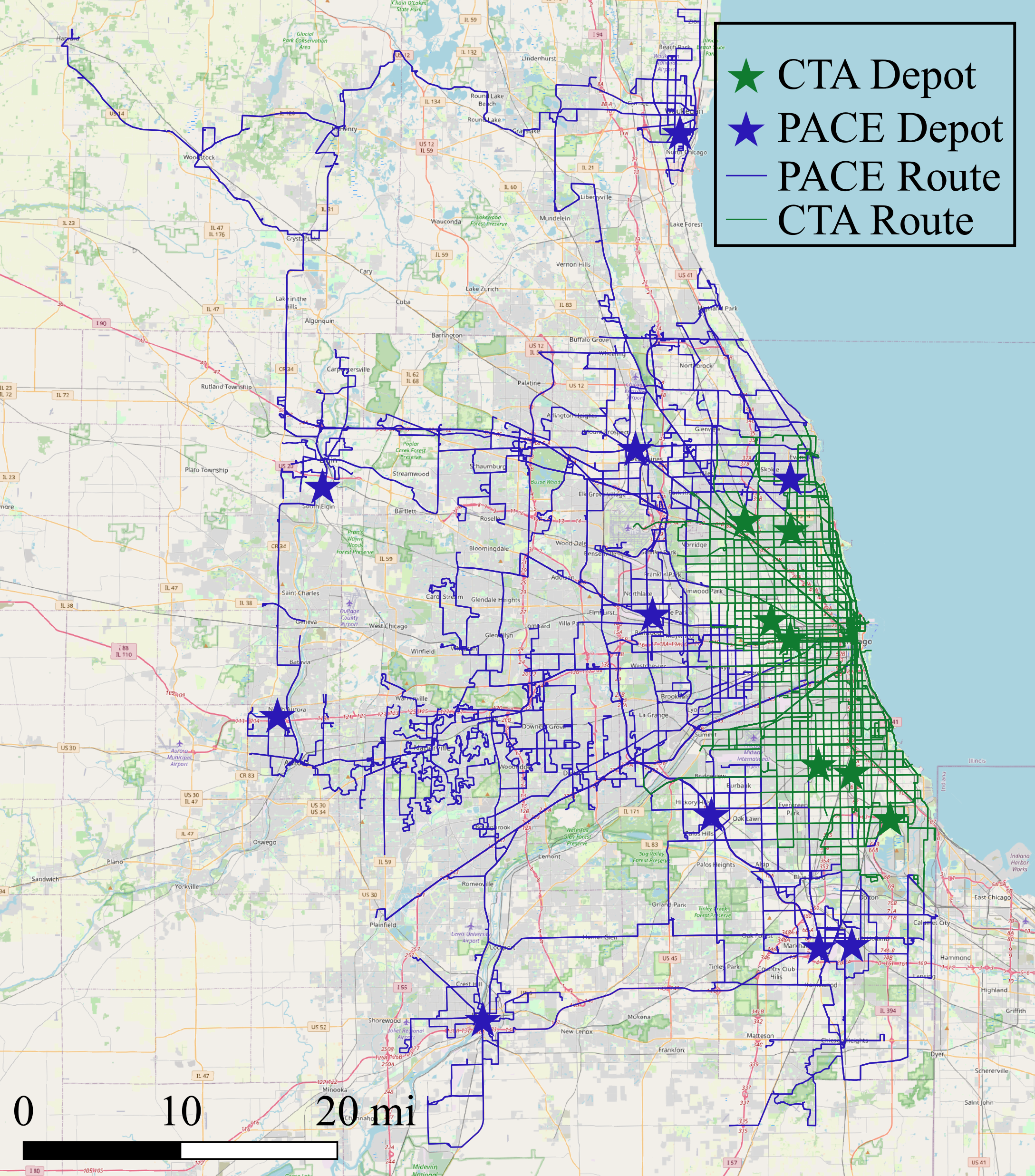}
  \caption{Chicago metropolitan area.}
  \label{fig:chicago_map}
\end{subfigure}
\caption{Maps showing the case study regions and depot locations and routes of the three transit
agencies.}
\label{fig:Map}
\end{figure}

The battery capacity $\overline{B}=120$ minutes. The cost parameter $K^\prime=50,000$ seconds. The values of $\bigM_1=10^6$ and $\bigM_2=3\times 10^6$. The planning horizon $\overline{T}=86,400$ seconds, i.e., 24 hours. The weight factor for recharging time between consecutive blocks $W^\prime=1$. The revenue trip information, i.e. start and end times, and locations, are obtained from the GTFS data \citep{GTFS}. However, data for deadhead travel times is not available, so we assumed an average speed of 30 mph and used Manhattan distances as a basis for estimating deadheading travel times. The battery consumption $B_i$'s are calculated as a summation of revenue trip and deadhead trip travel times within a block. For the parameters $L$ and $U$, we set $L = 0$ and left the upper bound $U$ unrestricted. For our analysis, we consider a 40-foot bus for both DVs and EVs. The assumed vehicle energy consumption rate $E = 220$ kW \citep{CTAreport}. To determine the EV battery capacity, we use the equation $R^\kappa = E \overline{B}$, which yields a battery capacity of 440 kWh for an EV range of two hours. The parameters are summarized in \cref{param_value}.

Regarding the charging infrastructure, we considered both fast charging and slow charging. Overnight charging utilizes slow charging, while daytime charging utilizes fast charging. The power for fast charging is represented by $P^\delta=450$ kW, while slow charging is represented by $P^\nu = 125$ kW. To determine the rates of recharge, we can apply the formulas $R^\delta = \frac{P^\delta}{E}$ and $R^\nu = \frac{P^\nu}{E}$. These calculations yield recharge rates of $R^\delta = 2.045$ and $R^\nu = 0.568$, respectively. In words, charging a bus for one minute overnight or during the day increases the SOC by 0.568 minutes or by 2.045 minutes, respectively.

\begin{table}[!htb]\caption{Parametric values used.}\label[tab]{param_value} 
    \footnotesize
    {\begin{tabular*}\textwidth{c@{\extracolsep{\fill}}cccccccccc}
        \toprule
          $\overline{B}$ (minute) & $K$ (\$) & $M_1$ & $M_2$ & $R^\delta$ & $R^\nu$ & $\overline{T}$ (minute) & $W^\prime$ (\$)\\
        \midrule
          $120$ & $50000$ & $10^6$ & $3\times10^6$ & $0.568$ & $2.045$ & $86400$ & $1$\\
        \bottomrule
    \end{tabular*}}
\end{table}

\subsection{Computational performance of the heuristic methods}\label[sec]{computational_performance}
We conduct an analysis to reveal the computational performance of the Greedy and DaC methods, comparing them to the MILP solved by Gurobi using $|\sB|$ as a problem size determinant lever. We utilized data from a depot located in the Chicago metropolitan area. The parametric design outlined in the previous section served as the baseline. We randomly selected a subset of trips from the available trips of this depot with $|\sB|\in [10, 20, 30, 40, 50, 100, 200, 300]$ following a uniform distribution for the selection probability. A total of 2,230 instances were solved using the three methods, with a computational time limit of 1,200 seconds per instance.

In the Greedy method, it is assumed that all buses begin their daily trips with a fully charged battery. However, in the proposed MILP model, we allow the model to determine the required initial battery level dynamically. This assumption is made in the Greedy algorithm to simplify the model and enable it to handle large-scale problems more efficiently. To ensure a fair comparison between the Greedy, DaC methods, and the MILP solver, we change the proposed MILP model by incorporating an additional constraint. \cref{init_battery} ensures that the initial battery level of the buses is equal to their battery capacity. 

\begin{equation}\label[cons]{init_battery}
    v_{si} = \overline{B}y_{si} \qquad \forall i\in \sB 
\end{equation}

All computations were performed on a workstation equipped with an Intel{\textsuperscript \textregistered} Xeon{\textsuperscript \textregistered} Gold 6138 CPU @2.0 GHz, 128 GB of RAM, and 64 cores. The Python 3.8.8 interface to the commercial solver Gurobi 10.0 \citep{gurobi} was employed to solve the problem instances.

The computational performance of the MILP model solved with Gurobi, as well as the Greedy and DaC solution approaches, are reported in \cref{computational_exp}. The first column specifies the number of trips, while the second column indicates the number of instances solved for a given number of trips. For the MILP approach, the first column represents the number of instances where optimality was achieved. The second column for the MILP shows the average MIP gap percentage, which measures the difference between the objective value of the best-known feasible solution found and the best lower-bound found. The Greedy column displays the average percentage $\Delta$ gap, indicating the difference between the solution reported by the solver and the solution found by the Greedy method. The DaC columns provide information on the number of times each scenario is divided into subproblems, $m$, as well as the average percentage $\Delta$ gap between the solution reported by the solver and the solution found by the DaC method. A negative average percentage indicates that the solutions obtained by the Greedy or DaC methods were superior to those achieved by the MILP approach. Note that in these instances, MILP actually did not reach an optimal solution.

\begin{table}[!htb]
\caption{Computational performance of the heuristic methods}
\label{computational_exp}
\resizebox{\textwidth}{!}{%
\begin{tabular}{@{}cccccccccc@{}}
\toprule
\multirow{2}{*}{\# Trips} & \multirow{2}{*}{\# Solved} &  & \multicolumn{2}{c}{MILP}&  & Greedy&  & \multicolumn{2}{c}{DaC}\\ 
\cmidrule(lr){4-5} \cmidrule(lr){7-7} \cmidrule(l){9-10} 
&&& \# OPTS & Avg. MIP gap (\%) && Avg. $\Delta$ gap (\%) && $m$ & Avg. $\Delta$ gap (\%) \\ 
\midrule
10  & 1000 &  & 999 & 7.51E-06  &  & 11.69  &  & 2 & 9.13\\
20  & 500  &  & 497 & 4.69E-05  &  & 12.52  &  & 2 & 6.97\\
30  & 500  &  & 493 & 1.55E-04  &  & 13.35  &  & 4 & 17.76\\
40  & 100  &  & 98  & 1.97E-04  &  & 15.02  &  & 2 & 7.49\\
50  & 100  &  & 94  & 5.60E-04  &  & 16.20  &  & 4 & 15.81\\
100 & 10   &  & 8   & 0.035 &  & 15.39  &  & 2 & 8.00\\
200 & 10   &  & 0   & 15.5 &  & 3.65   &  & 4 & -1.01\\
300 & 10   &  & 0   & 37.6 &  & -21.74 &  & 4 & -25.87\\
\bottomrule
\end{tabular}%
}
\end{table}

In the analysis presented in \cref{gap}, it is evident that utilizing the MILP approach through a solver is only effective for handling small-scale problems. Comparatively, the DaC demonstrates slightly better performance compared to the Greedy approach with some exceptions reported on \cref{computational_exp}. Furthermore, the results shown in \cref{fig:gap_a} indicate that as the number of subproblems (i.e., the number of times a problem is divided) increases, the quality of the solutions decreases. However, despite the increase in the number of subproblems, there is an improvement in solution quality for larger cases (e.g., 100, 200, and 300 trips) as shown in \cref{fig:gap_b}. This is because the solution quality of the Greedy and DaC approaches is compared to sub-optimal solutions obtained from the MILP solver.

\begin{figure*}[!htbp]
    \centering
    \subfloat[\centering \label{fig:gap_a}]{%
        \includegraphics*[width=0.485\textwidth,height=\textheight,keepaspectratio]{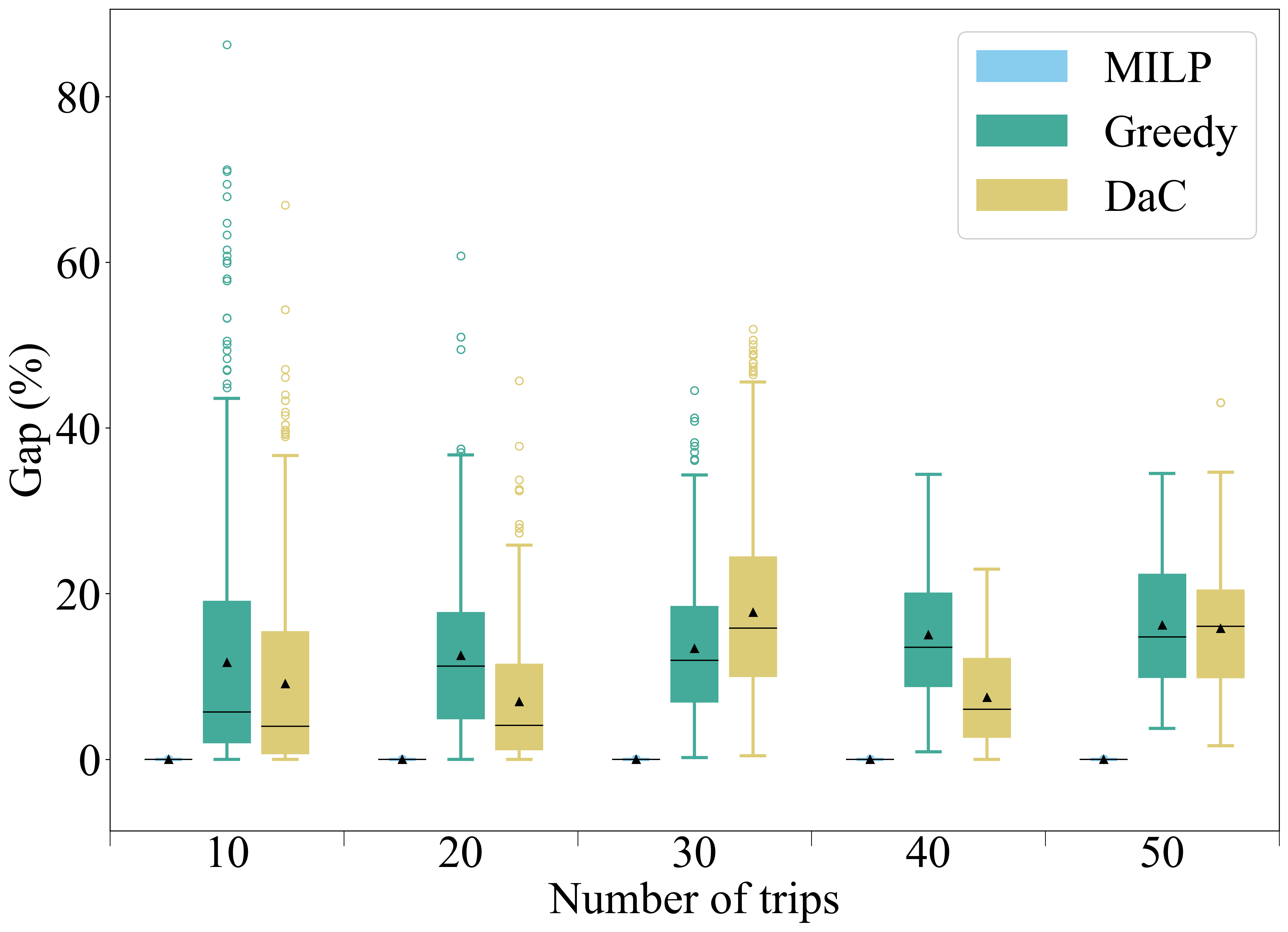}}
    \subfloat[\centering \label{fig:gap_b}]{\includegraphics*[width=0.495\textwidth,height=\textheight,keepaspectratio]{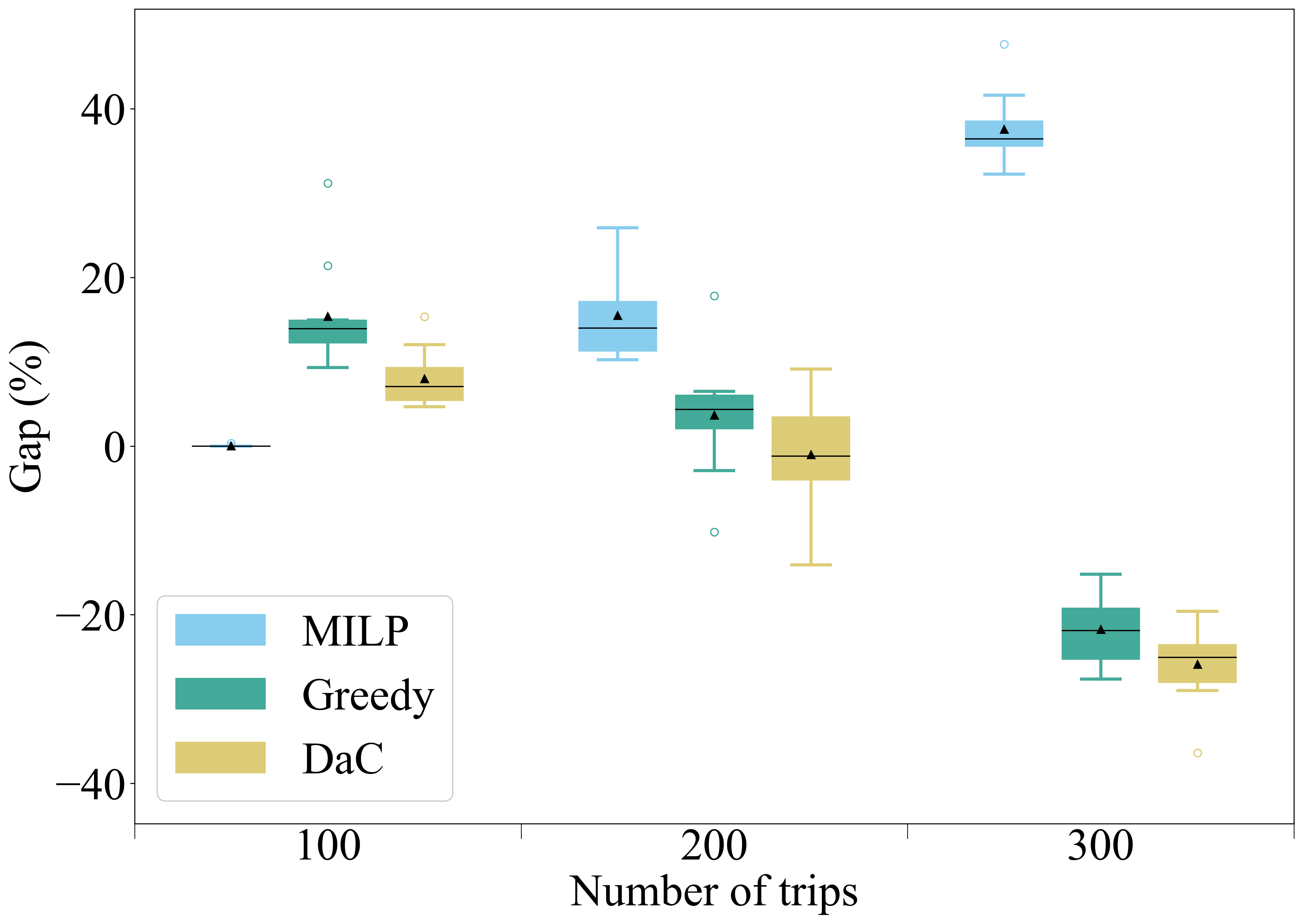}}
    \caption{Gap statistics for the three solution approaches.} \label[fig]{gap}
\end{figure*}

The solution times for the different approaches are visually represented in \cref{fig:time_res_a} and \ref{fig:time_res_b}. These figures clearly demonstrate that the Greedy method consistently outperforms the other approaches in terms of solution time. The DaC method also shows a faster performance compared to the MILP solver, although it is slightly slower than the Greedy method. However, it is important to note that as the problem size, measured by the number of trips, increases, the solution time for the DaC method experiences a substantial increase. 

Taking the problem size variability into account, each method exhibits its own strengths. The MILP solver performs well for small cases, where its optimal solutions can be effectively utilized. The DaC method proves to be effective for medium-sized cases, offering a balance between solution quality and computational efficiency. The Greedy method, on the other hand, excels in handling large cases by providing rapid solutions that are reasonably close to the solutions obtained by the MILP solver, for solution quality refer to \cref{computational_exp} and \cref{gap}. This demonstrates the efficiency of the Greedy method in terms of both speed and solution quality. As the Greedy is the fastest approach for very large-scale instances and finds reasonable solutions, it is utilized in \cref{case_studies}.

\begin{figure*}[!htbp]
    \centering
    \subfloat[\centering \label{fig:time_res_a}]{%
        \includegraphics*[width=0.48\textwidth,height=\textheight,keepaspectratio]{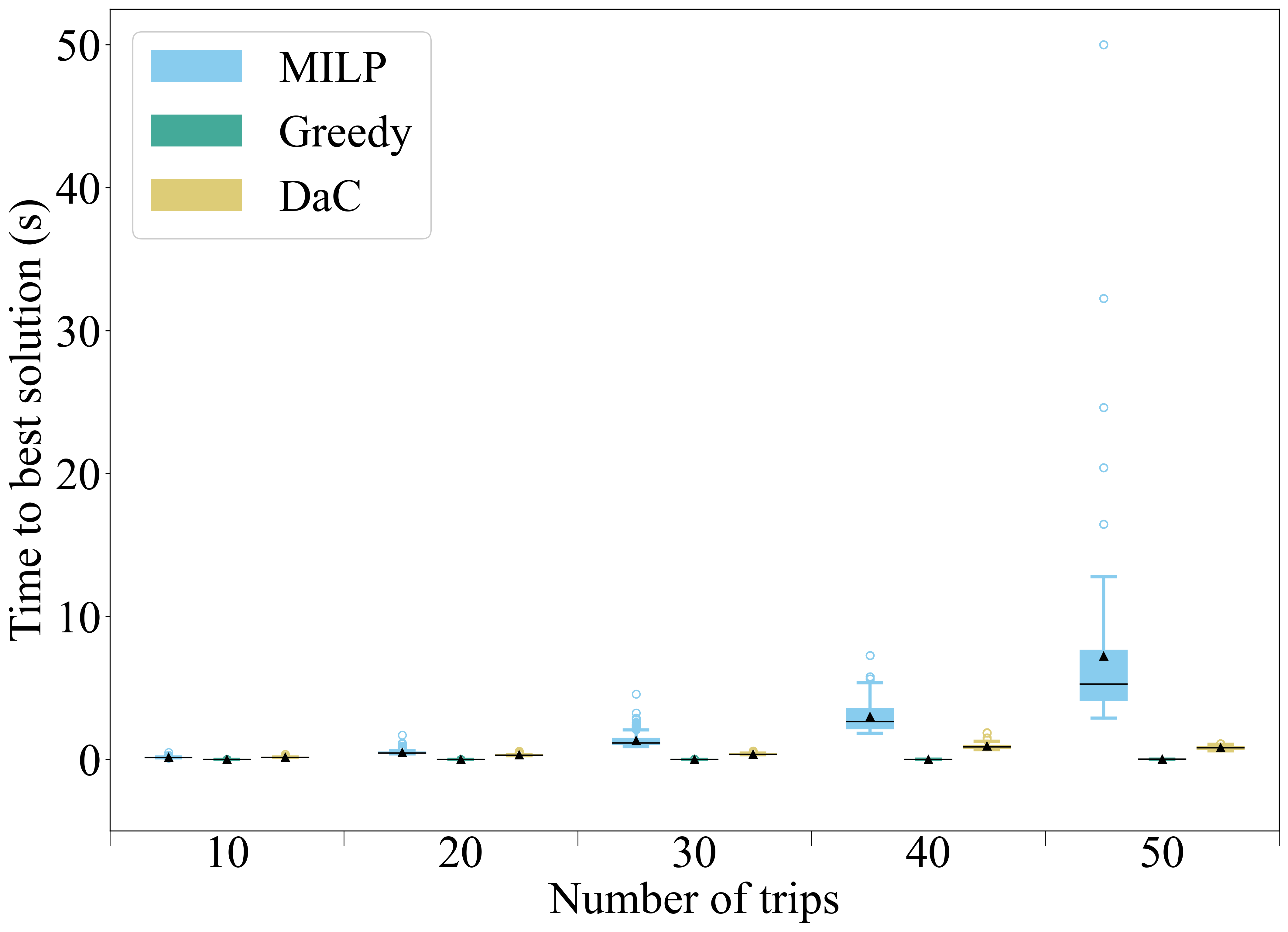}}
    \subfloat[\centering \label{fig:time_res_b}]{\includegraphics*[width=0.495\textwidth,height=\textheight,keepaspectratio]{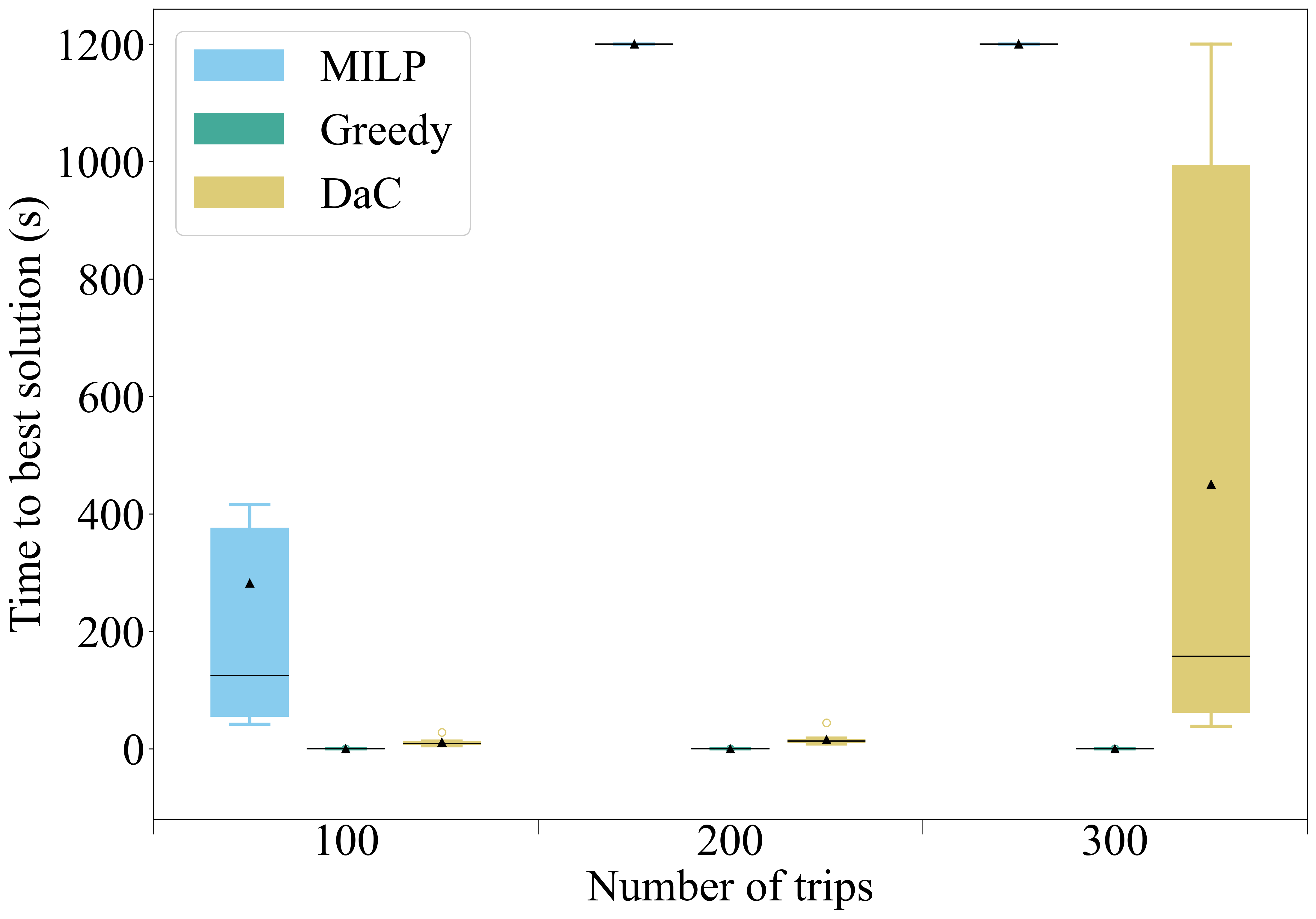}}
    \caption{Time-to-best-solution statistics for the three solution approaches.} \label[fig]{time_res}
\end{figure*}

\subsection{Large-scale case studies}\label[sec]{case_studies}
Case studies were conducted to provide insights for key metrics, such as share of EVs, number of EVs per each DV replaced, and share of revenue trip time over the day. These studies also demonstrate the applicability of the proposed approach at large-scale problem instances. We adopted the CTA, PACE, and CapMetro data as explained previously. The number of revenue trips for CTA, PACE, and CapMetro are nearly 18,700, 7,300, and 5,400, respectively. We consider three vehicle range lever: 60, 120, and 150 miles and three EV deployment target lever: Low, medium, and high. The deployment target is controlled through parameters $K$ and $W$, which are described in the \cref{methodology}. We also ran a \emph{DV only} scenario for each agency by solving the SDVSP allowing longer blocks followed by a version of the BCP without electrification constraints. Similarly, the leftover, longer than EV range, blocks in each electrification scenario are also chained into DVs using that version of the BCP.

\cref{percent_share} presents the percent share of EVs and DVs (i.e., the fleet decomposition) on the left y-axis, and the total number of buses on the right one. Since our method does not implement hard constraints on the block length, 100\% electrification is not guaranteed but as the results reveal, a near 100\% electrification is possible at the expense of a substantial fleet size increase. These are 54\%, 59\%, and 58\% for CTA, PACE, and CapMetro, respectively, in the case of high deployment and 150-mile range compared to DV only. Note that ``DV'' on x-axis of \cref{percent_share} and figures to be presented hereafter denotes the DV only scenario.

\begin{figure*}[!htb]
\centering
\subfloat[CTA.\label{fig:Slide1}]{%
\includegraphics*[width=0.48\textwidth,height=\textheight,keepaspectratio]{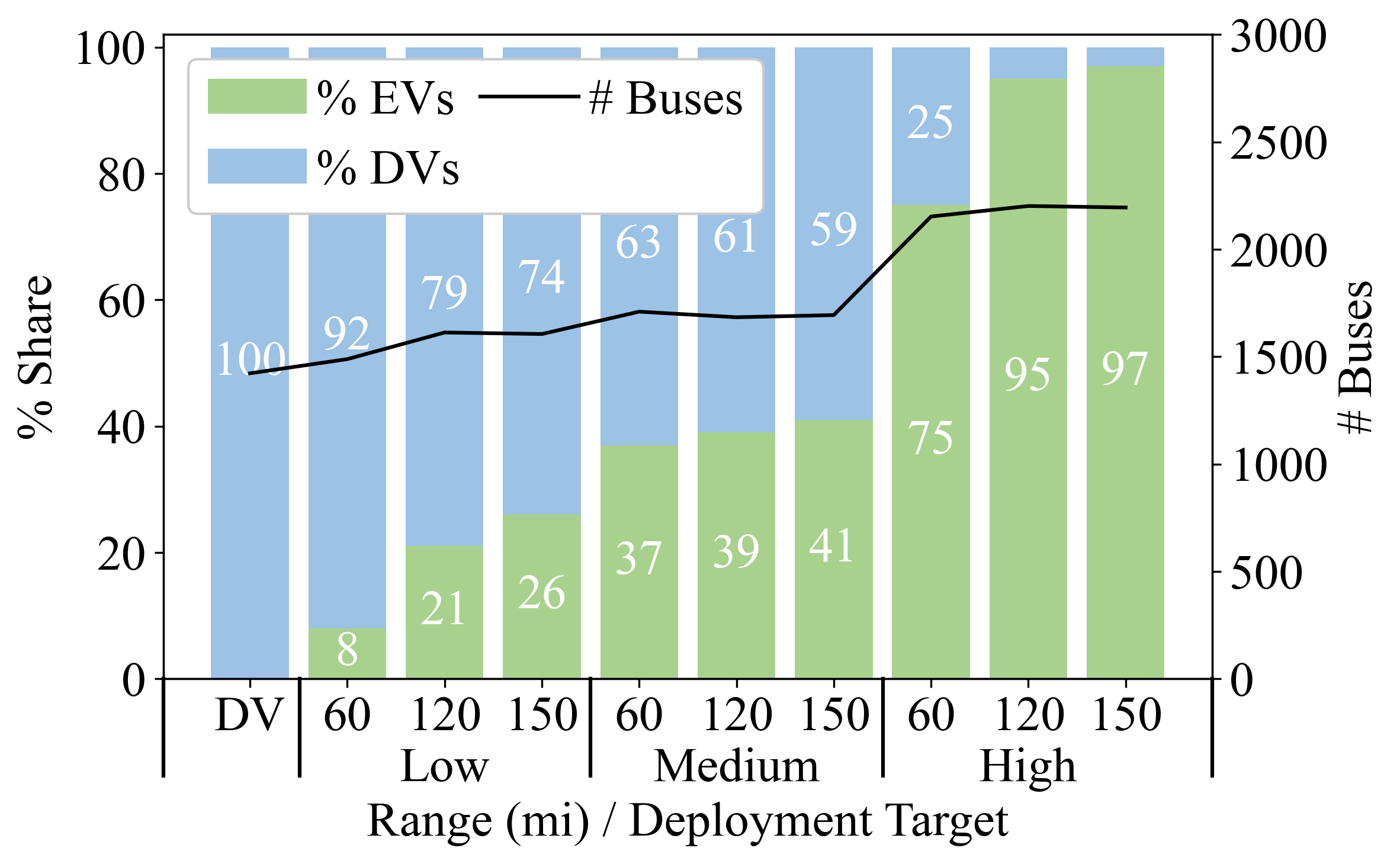}}\quad
\subfloat[PACE.\label{fig:Slide5}]{%
\includegraphics*[width=0.48\textwidth,height=\textheight,keepaspectratio]{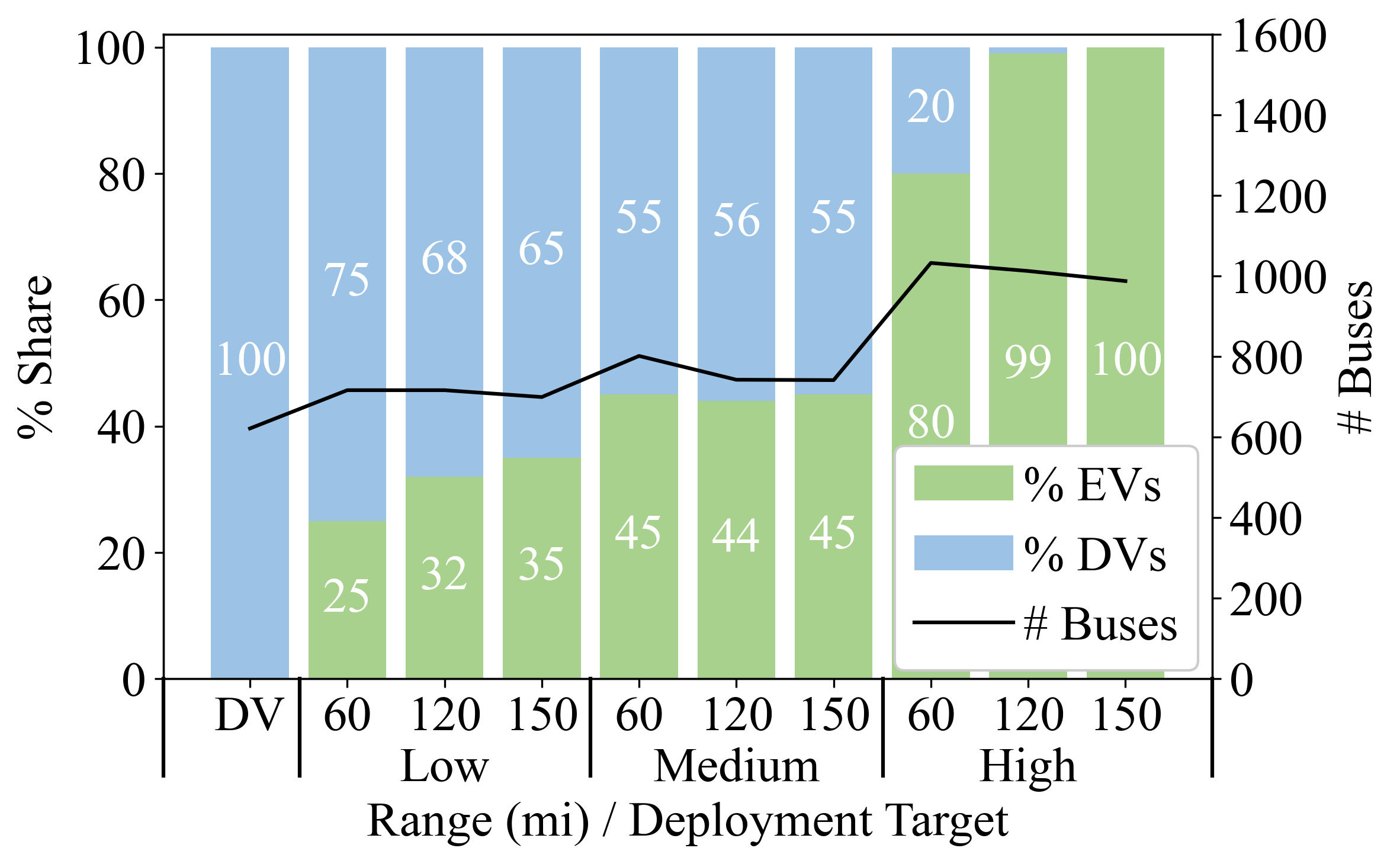}}

\subfloat[CapMetro.\label{fig:Slide9}]{%
\includegraphics*[width=0.48\textwidth,height=\textheight,keepaspectratio]{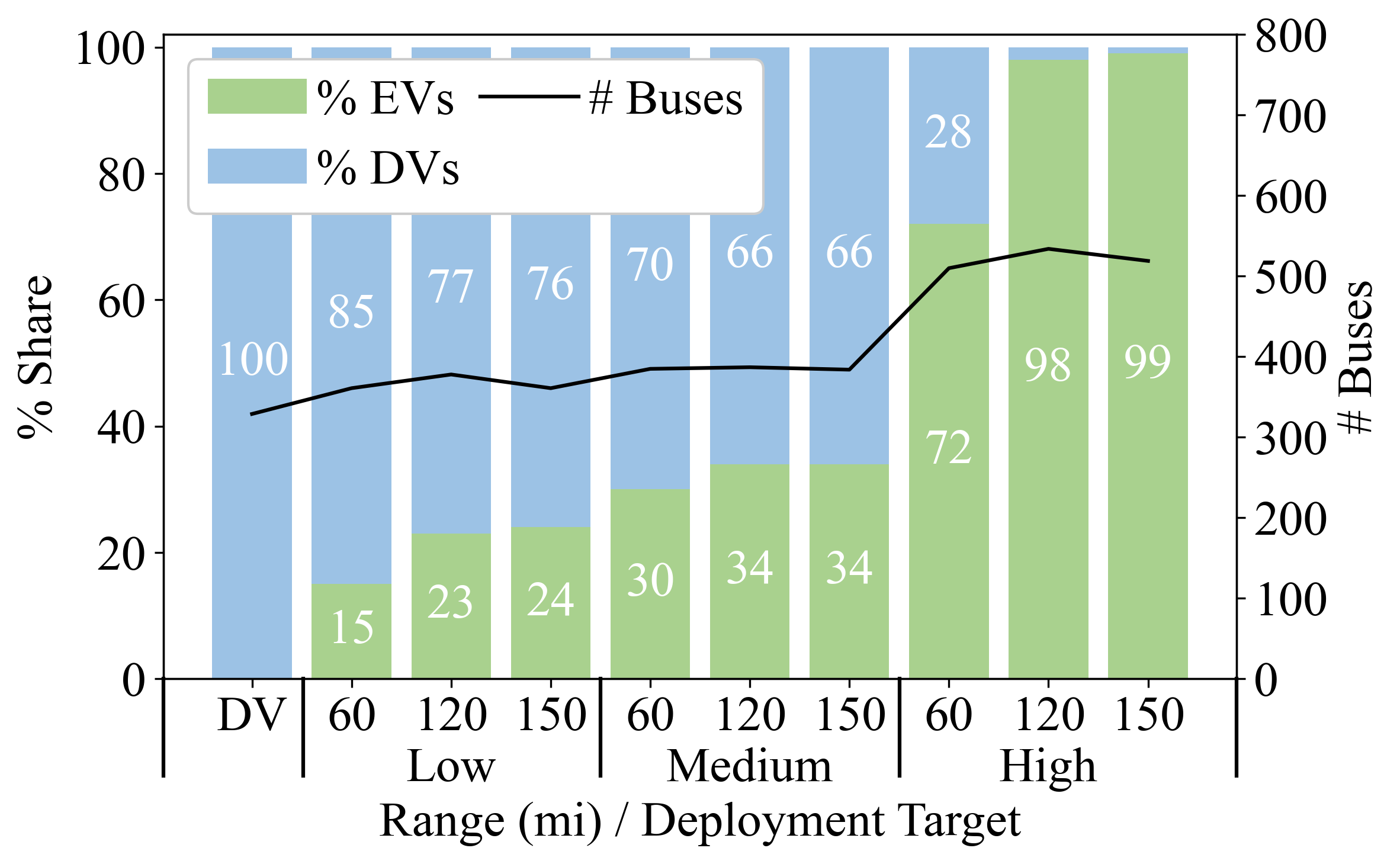}}
\caption{Percent share and number of buses.} \label[fig]{percent_share}
\end{figure*}

\cref{EV_DV_replacement} presents the number of EVs replacing one DV. This metric is of particular interest to transit agencies as it informs on an expected fleet size with EV deployment targets. Number of EVs in a given scenario is divided by the difference of DVs in the DV only and the given scenario to obtain this ratio. The ratio decreases as the EV range increases. This is intuitive because EVs become similar to DVs with increasing EV range. We do not observe such a strong relationship between deployment target and replacement ratio for a given EV range with the exception of 60-mile range, where there is a substantial decrease moving from low to medium.

\begin{figure*}[!htb]
\centering
\subfloat[CTA.\label{fig:Slide2}]{%
\includegraphics*[width=0.48\textwidth,height=\textheight,keepaspectratio]{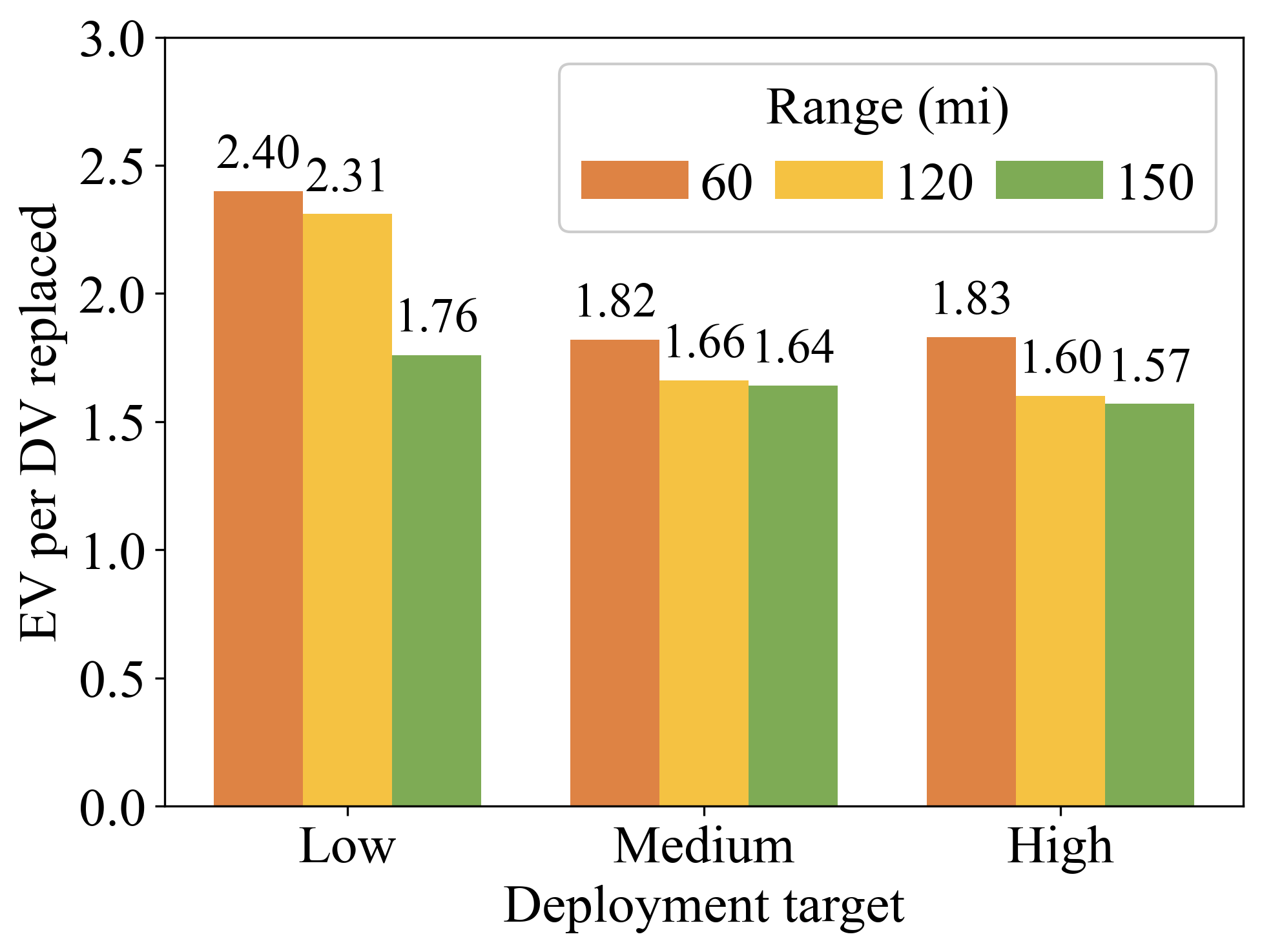}}\quad
\subfloat[PACE.\label{fig:Slide6}]{%
\includegraphics*[width=0.48\textwidth,height=\textheight,keepaspectratio]{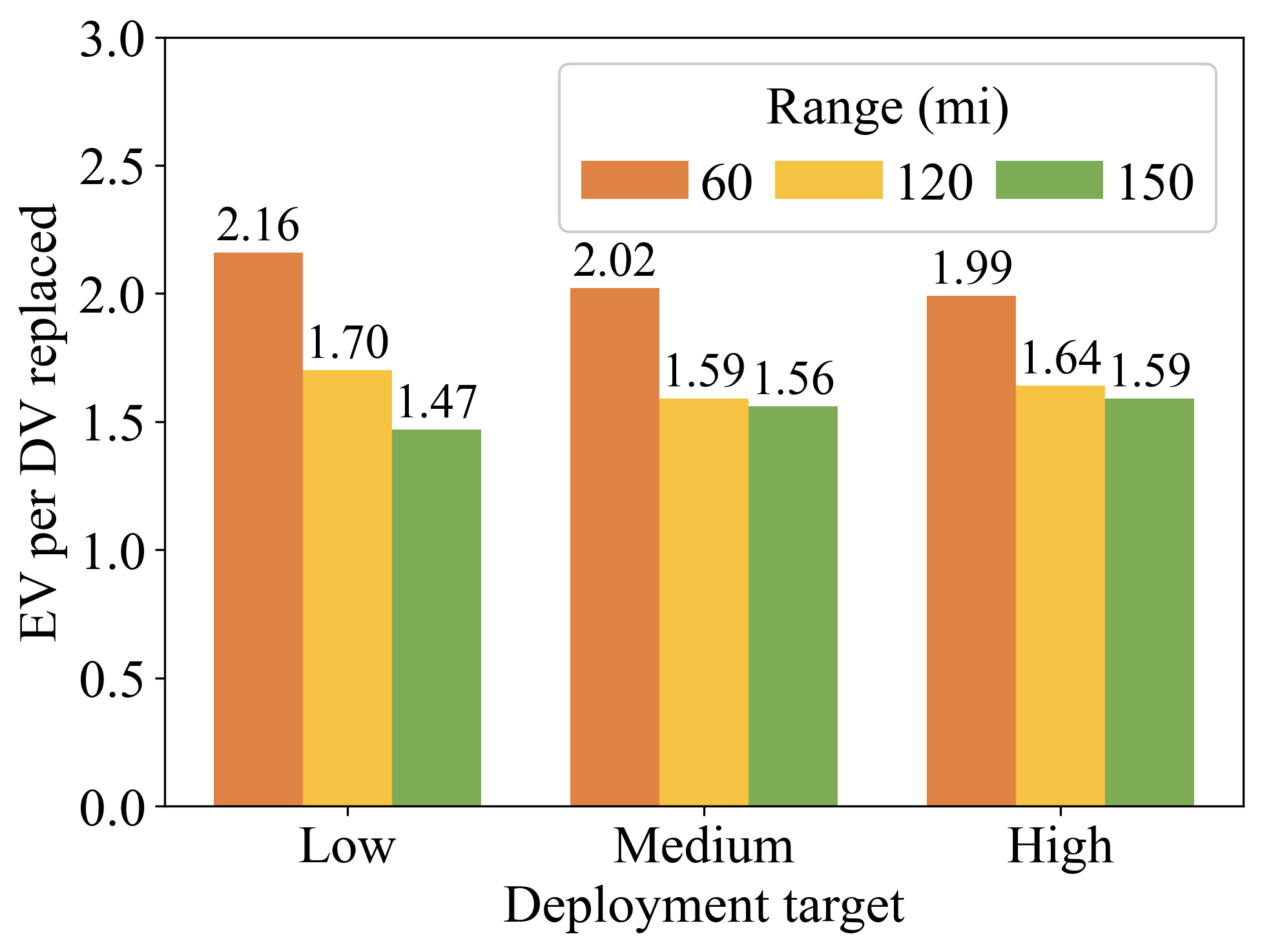}}

\subfloat[CapMetro.\label{fig:Slide10}]{%
\includegraphics*[width=0.48\textwidth,height=\textheight,keepaspectratio]{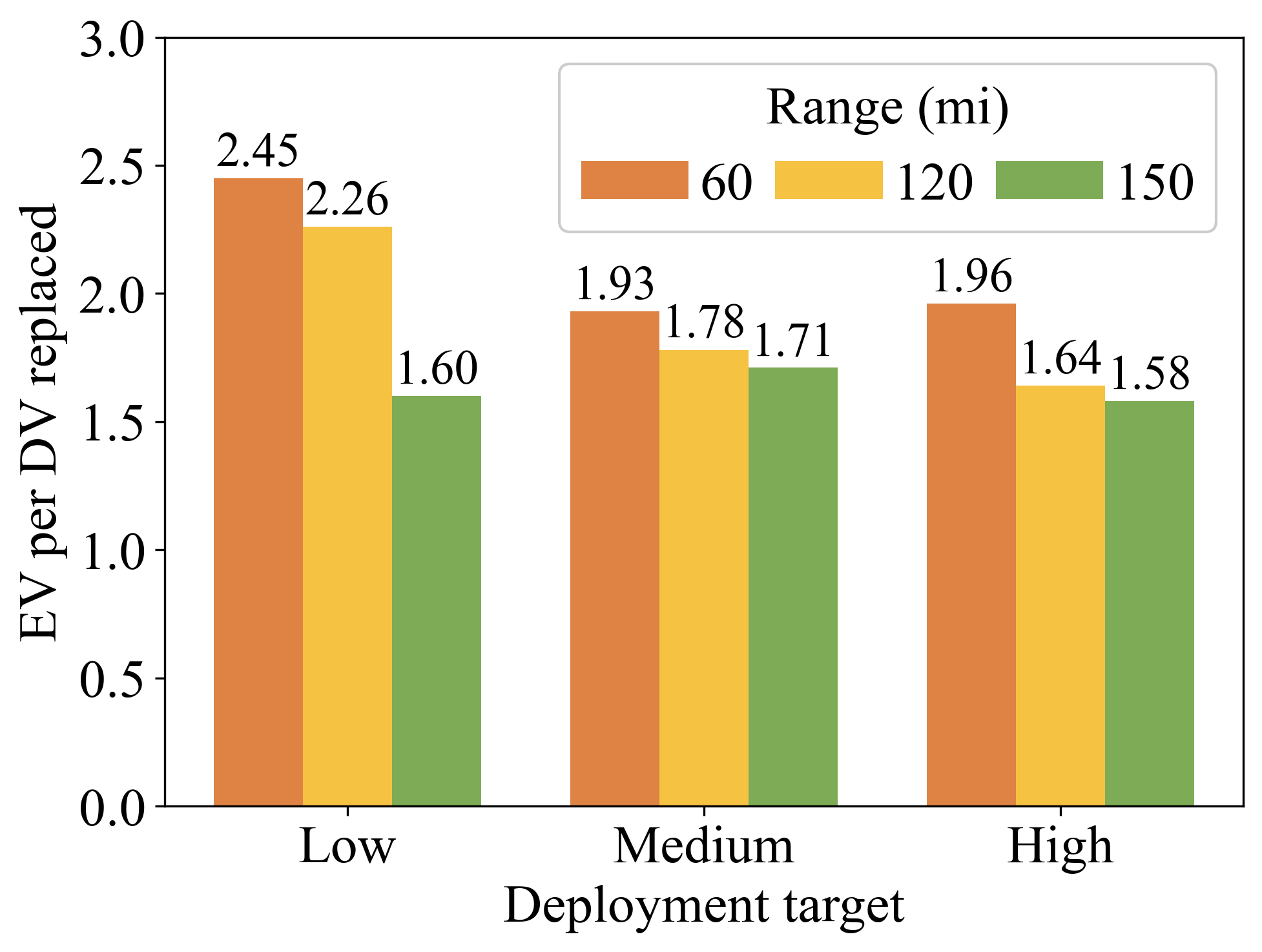}}
\caption{EV per DV replaced by deployment target.} \label[fig]{EV_DV_replacement}
\end{figure*}

The block efficiency is demonstrated in \cref{block_efficiency_stats} and is simply the ratio of revenue trip time to the entire block time. Comparing the high-deployment, 150-mile scenario to DV only, one observes an 18\% reduction in the share of revenue trip time for CTA and CapMetro, whereas a 20\% reduction is observed for PACE. Since blocks become shorter with higher EV deployment, there are more deadheading trips to and from the depot, which explains this change. Moreover, with higher EV deployment, there is more layover at the depots due to recharging, which is demonstrated in \cref{vehicle_efficiency_stats}. The vehicle schedule efficiency is calculated by dividing the revenue trip time to the entire horizon. In this case the efficiency decrease is by 35\% for CTA, and 37\% for PACE and CapMetro. Compared to the block efficiency, the drops are even more dramatic because there is also time loss due to recharging, and not only extra deadheading.

\begin{figure*}[!htb]
\centering
\subfloat[CTA.\label{fig:Slide3}]{%
\includegraphics*[width=0.48\textwidth,height=\textheight,keepaspectratio]{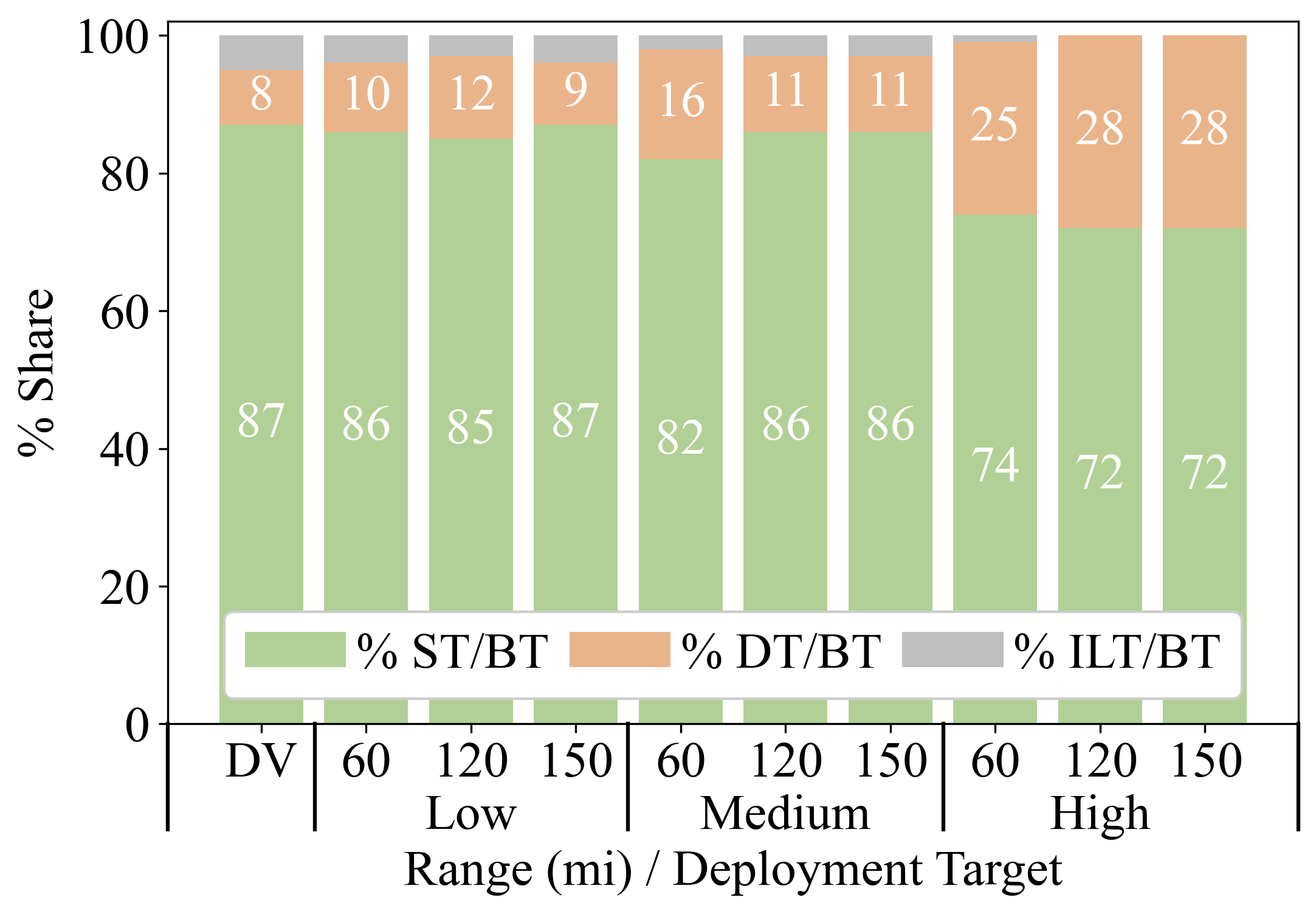}}
\quad\subfloat[PACE.\label{fig:Slide7}]{%
\includegraphics*[width=0.48\textwidth,height=\textheight,keepaspectratio]{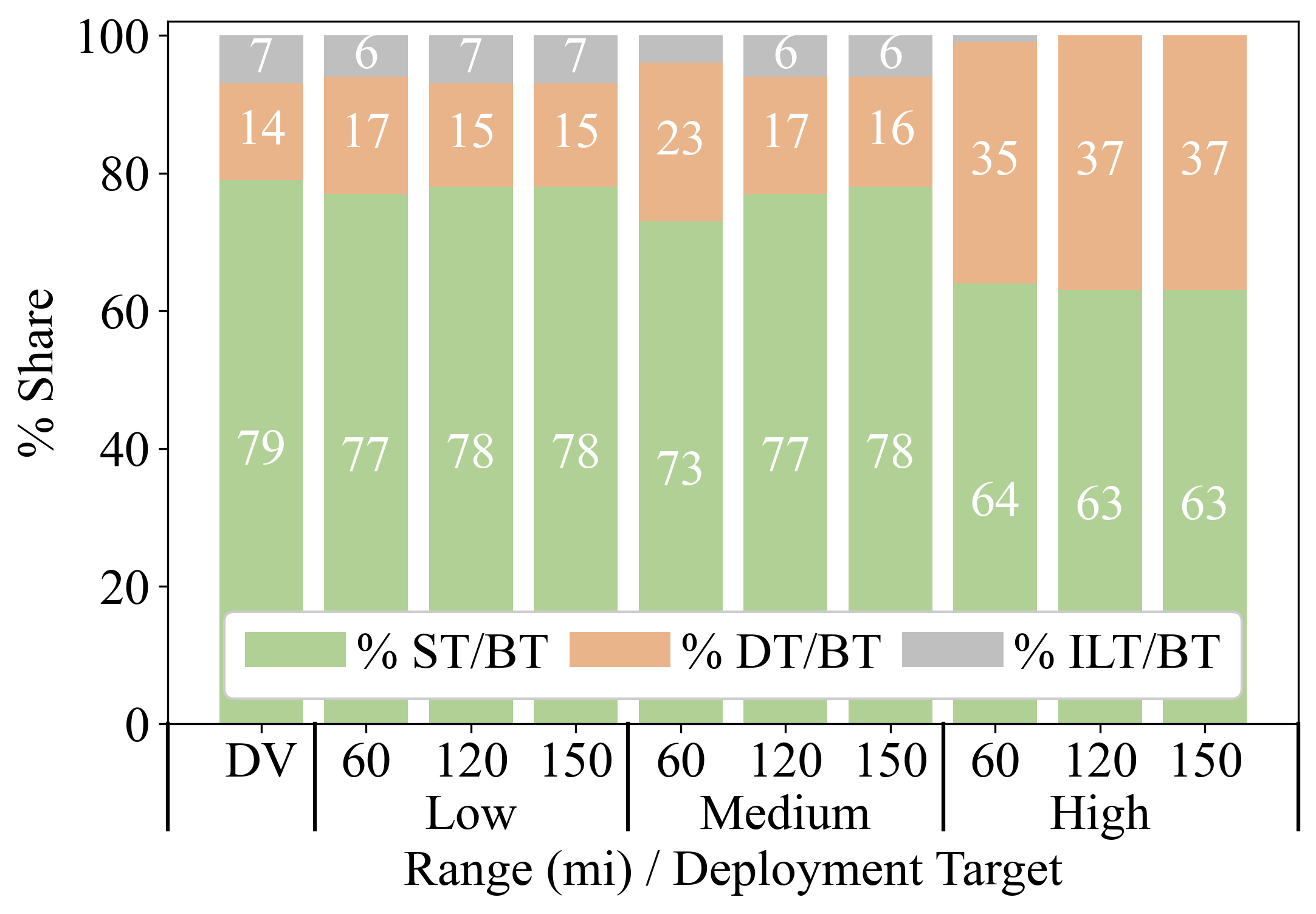}}

\subfloat[CapMetro.\label{fig:Slide11}]{%
\includegraphics*[width=0.48\textwidth,height=\textheight,keepaspectratio]{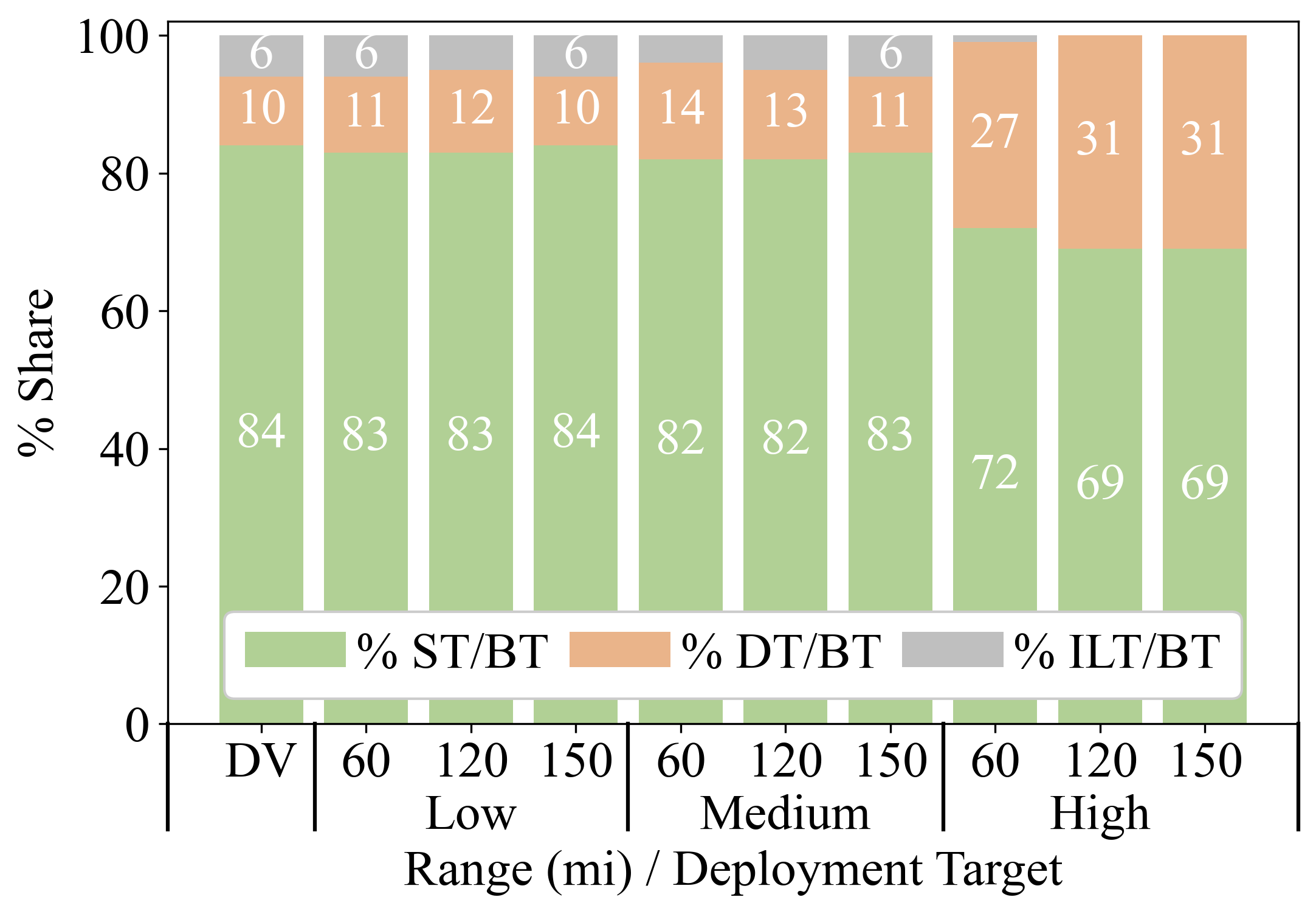}}
\caption{Block efficiency statistics. BT: Block time, ST: Service time, DT: Deadhead time, ILT: Intertrip layover time.} \label[fig]{block_efficiency_stats}
\end{figure*}

\begin{figure*}[!htb]
\centering
\subfloat[CTA.\label{fig:Slide4}]{%
\includegraphics*[width=0.48\textwidth,height=\textheight,keepaspectratio]{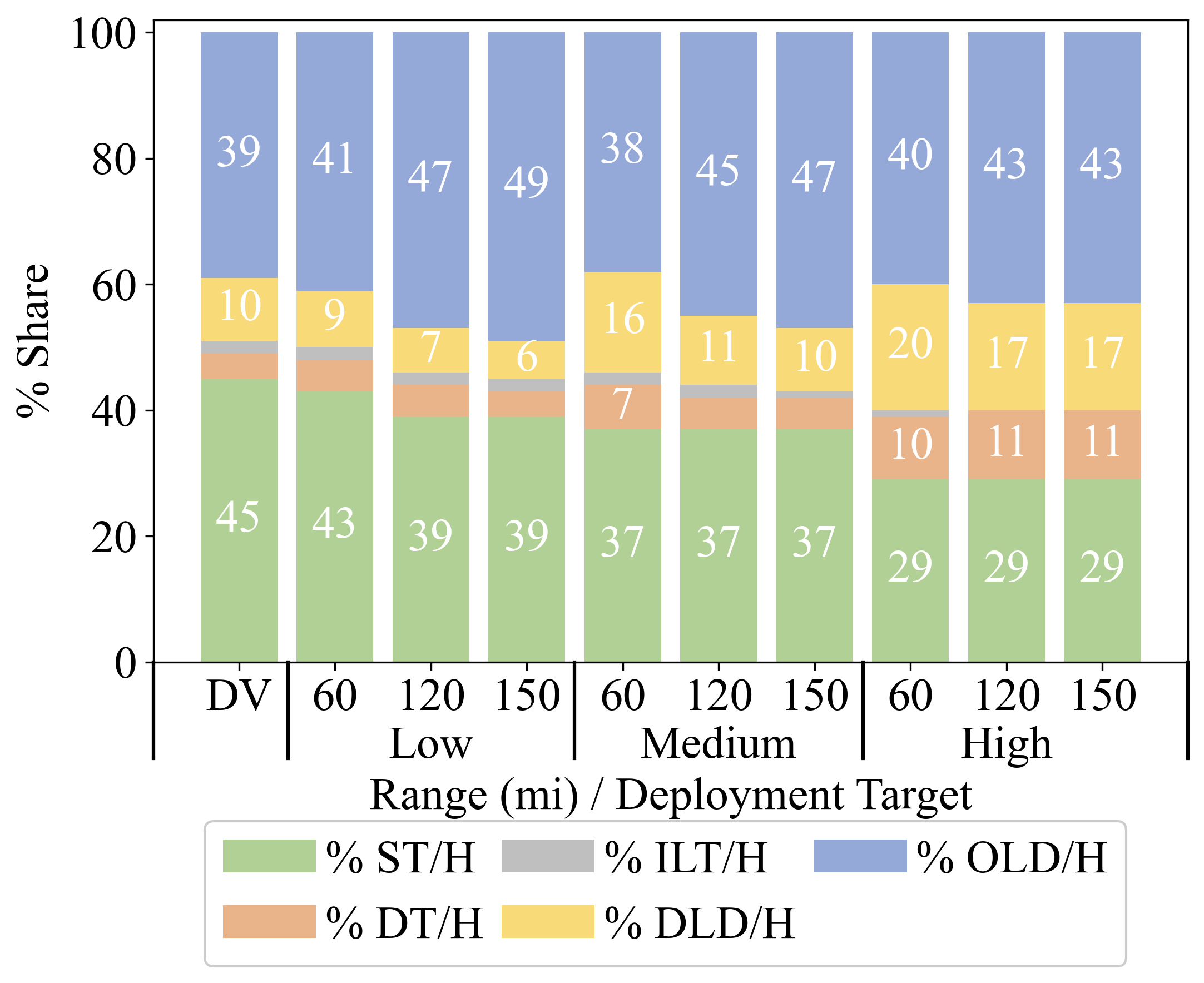}}\quad
\subfloat[PACE.\label{fig:Slide8}]{%
\includegraphics*[width=0.48\textwidth,height=\textheight,keepaspectratio]{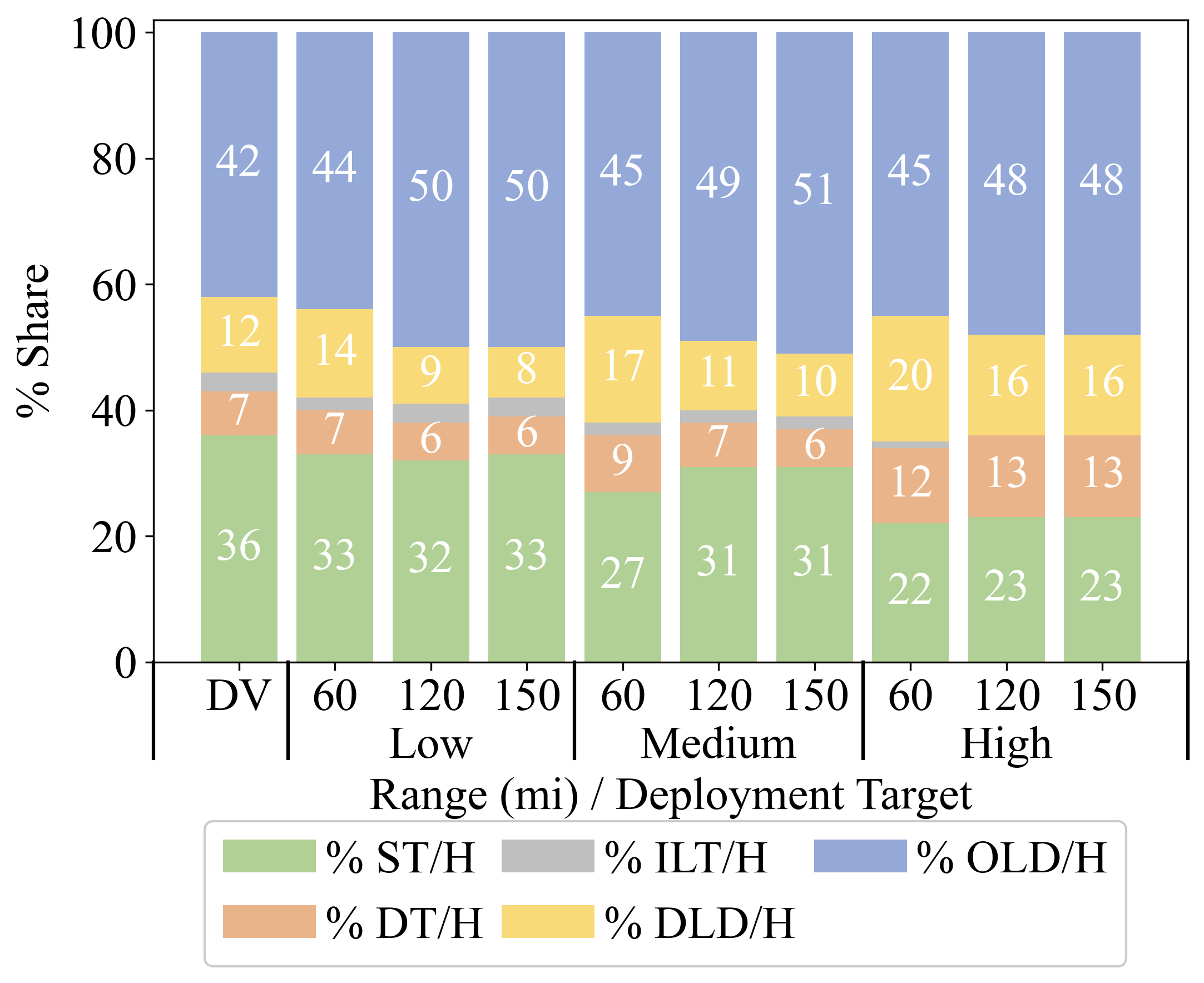}}

\subfloat[CapMetro.\label{fig:Slide12}]{%
\includegraphics*[width=0.48\textwidth,height=\textheight,keepaspectratio]{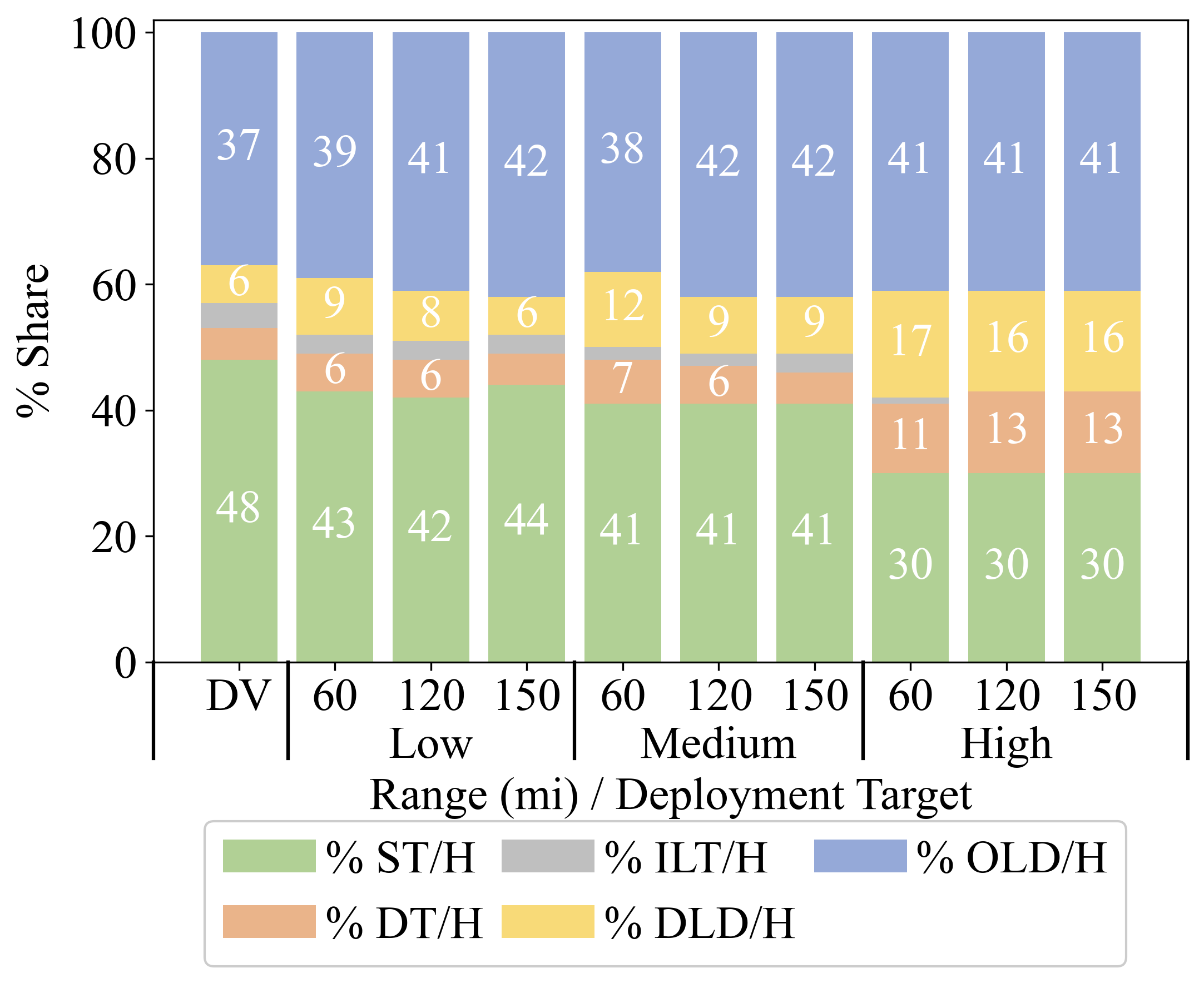}}
\caption{Vehicle schedule efficiency statistics. H: Horizon, ST: Service time, DT: Deadhead time, ILT: Intertrip layover time. DLD: Daytime layover at depot, OLD: Overnight layover at depot.} \label[fig]{vehicle_efficiency_stats}
\end{figure*}

\section{Conclusion}\label[sec]{conclusion}
In this study, we proposed a two-stage solution framework to solve the SDEVSP. We solve the SDVSP to generate blocks in the first stage and then solve the BCP to form vehicle schedules. While we utilized traditional solution methods to solve the SDVSP, three solution approaches, namely MILP, DaC, and Greedy, were developed. An extensive computational experiments conducted to compare these methods revealed solution quality and computational time trade-off. We observed that the Greedy method can solve large-scale instances considerably fast, and its solution quality is comparable to that of the MILP within reasonable solution time limits. 

Utilizing the greedy method, we conducted case studies for three transit agencies: CTA, PACE, and CapMetro. Near 100\% electrification is possible with a replacement ratio of $\sim$1.6 EVs per DV and a 150-mile range. However, vehicle schedule efficiency would decrease by $\sim$35\%. These results can be considered optimistic given our assumptions on depot size and charger availability. On the other hand, we do not consider opportunistic charging at the terminal stops, which would increase the schedule efficiency.

The SDEVSP is quite complex, and there are yet more aspects that are not considered in this study. Some of these are i) charger availability, ii) charger choice, e.g., pantograph or traditional, iii) charger level selection, e.g., 150 kW and 450 kW, iv) charger location including en-route charging, v) non-linear charge and discharge profiles, and vi) vehicle sizes, e.g., 40 ft and 60 ft. The readily difficult problem can easily become intractable considering a combination of these aspects. Therefore, we may tackle these problems in stages. The proposed two-step solution approach only finds a solution to the dauntingly challenging problem, and it can be enhanced. The greedy solution method is flexible to incorporate charger availability, charger level selection, and non-linear charge profiles. Future studies will improve the greedy algorithm and propose methods to address a subset of these aspects.

\section*{Acknowledgments}
This material is based upon work supported by the U.S. Department of Energy, Office of Science, under contract number DE-AC02-06CH11357. 
This report and the work described were sponsored by the U.S. Department of Energy (DOE) Vehicle Technologies Office (VTO) under the Systems and Modeling for Accelerated Research in Transportation (SMART) Mobility Laboratory Consortium, an initiative of the Energy Efficient Mobility Systems (EEMS) Program. 
Erin Boyd, a DOE Office of Energy Efficiency and Renewable Energy (EERE) manager, played an important role in establishing the project concept, advancing implementation, and providing guidance. The authors remain responsible for all findings and opinions presented in the paper. The findings are not suggestions for agencies to implement given the assumptions made in this study.

\clearpage
\bibliographystyle{elsarticle-harv} 
\bibliography{our_bib}

\begin{thebibliography}{34}
\expandafter\ifx\csname natexlab\endcsname\relax\def\natexlab#1{#1}\fi
\providecommand{\url}[1]{\texttt{#1}}
\providecommand{\href}[2]{#2}
\providecommand{\path}[1]{#1}
\providecommand{\DOIprefix}{doi:}
\providecommand{\ArXivprefix}{arXiv:}
\providecommand{\URLprefix}{URL: }
\providecommand{\Pubmedprefix}{pmid:}
\providecommand{\doi}[1]{\href{http://dx.doi.org/#1}{\path{#1}}}
\providecommand{\Pubmed}[1]{\href{pmid:#1}{\path{#1}}}
\providecommand{\bibinfo}[2]{#2}
\ifx\xfnm\relax \def\xfnm[#1]{\unskip,\space#1}\fi
\bibitem[{Abdelwahed et~al.(2020)Abdelwahed, van~den Berg, Brandt, Collins and
  Ketter}]{abdelwahed2020evaluating}
\bibinfo{author}{Abdelwahed, A.}, \bibinfo{author}{van~den Berg, P.L.},
  \bibinfo{author}{Brandt, T.}, \bibinfo{author}{Collins, J.},
  \bibinfo{author}{Ketter, W.}, \bibinfo{year}{2020}.
\newblock \bibinfo{title}{Evaluating and optimizing opportunity fast-charging
  schedules in transit battery electric bus networks}.
\newblock \bibinfo{journal}{Transportation Science} \bibinfo{volume}{54},
  \bibinfo{pages}{1601--1615}.
\newblock \DOIprefix\doi{10.1287/trsc.2020.0982}.
\bibitem[{Adler and Mirchandani(2017)}]{adler2017vehicle}
\bibinfo{author}{Adler, J.D.}, \bibinfo{author}{Mirchandani, P.B.},
  \bibinfo{year}{2017}.
\newblock \bibinfo{title}{The vehicle scheduling problem for fleets with
  alternative-fuel vehicles}.
\newblock \bibinfo{journal}{Transportation Science} \bibinfo{volume}{51},
  \bibinfo{pages}{441--456}.
\newblock \DOIprefix\doi{10.1287/trsc.2015.0615}.
\bibitem[{Alwesabi et~al.(2020)Alwesabi, Wang, Avalos and
  Liu}]{ALWESABI2020118855}
\bibinfo{author}{Alwesabi, Y.}, \bibinfo{author}{Wang, Y.},
  \bibinfo{author}{Avalos, R.}, \bibinfo{author}{Liu, Z.},
  \bibinfo{year}{2020}.
\newblock \bibinfo{title}{Electric bus scheduling under single depot dynamic
  wireless charging infrastructure planning}.
\newblock \bibinfo{journal}{Energy} \bibinfo{volume}{213},
  \bibinfo{pages}{118855}.
\newblock \DOIprefix\doi{10.1016/j.energy.2020.118855}.
\bibitem[{Auld et~al.(2016)Auld, Hope, Ley, Sokolov, Xu and
  Zhang}]{auld2016polaris}
\bibinfo{author}{Auld, J.}, \bibinfo{author}{Hope, M.}, \bibinfo{author}{Ley,
  H.}, \bibinfo{author}{Sokolov, V.}, \bibinfo{author}{Xu, B.},
  \bibinfo{author}{Zhang, K.}, \bibinfo{year}{2016}.
\newblock \bibinfo{title}{{POLARIS: A}gent-based modeling framework development
  and implementation for integrated travel demand and network and operations
  simulations}.
\newblock \bibinfo{journal}{Transportation Research Part C: Emerging
  Technologies} \bibinfo{volume}{64}, \bibinfo{pages}{101--116}.
\newblock \DOIprefix\doi{10.1016/j.trc.2015.07.017}.
\bibitem[{Blahut(2010)}]{alma99271453112005897}
\bibinfo{author}{Blahut, R.E.}, \bibinfo{year}{2010}.
\newblock \bibinfo{title}{Fast algorithms for signal processing}.
\newblock \bibinfo{publisher}{Cambridge University Press},
  \bibinfo{address}{New York}.
\bibitem[{Bodin(1983)}]{bodin1983routing}
\bibinfo{author}{Bodin, L.}, \bibinfo{year}{1983}.
\newblock \bibinfo{title}{Routing and scheduling of vehicles and crews}.
\newblock \bibinfo{journal}{Computer \& Operations Research}
  \bibinfo{volume}{10}, \bibinfo{pages}{69--211}.
\newblock \DOIprefix\doi{10.1016/0305-0548(83)90030-8}.
\bibitem[{Bunte and Kliewer(2009)}]{bunte2009overview}
\bibinfo{author}{Bunte, S.}, \bibinfo{author}{Kliewer, N.},
  \bibinfo{year}{2009}.
\newblock \bibinfo{title}{An overview on vehicle scheduling models}.
\newblock \bibinfo{journal}{Public Transport} \bibinfo{volume}{1},
  \bibinfo{pages}{299--317}.
\newblock \DOIprefix\doi{10.1007/s12469-010-0018-5}.
\bibitem[{Ceder and Wilson(1986)}]{CEDER1986331}
\bibinfo{author}{Ceder, A.}, \bibinfo{author}{Wilson, N.H.},
  \bibinfo{year}{1986}.
\newblock \bibinfo{title}{Bus network design}.
\newblock \bibinfo{journal}{Transportation Research Part B: Methodological}
  \bibinfo{volume}{20}, \bibinfo{pages}{331--344}.
\newblock \DOIprefix\doi{10.1016/0191-2615(86)90047-0}.
\bibitem[{Chao and Xiaohong(2013)}]{CHAO20132725}
\bibinfo{author}{Chao, Z.}, \bibinfo{author}{Xiaohong, C.},
  \bibinfo{year}{2013}.
\newblock \bibinfo{title}{Optimizing battery electric bus transit vehicle
  scheduling with battery exchanging: Model and case study}.
\newblock \bibinfo{journal}{Procedia - Social and Behavioral Sciences}
  \bibinfo{volume}{96}, \bibinfo{pages}{2725--2736}.
\newblock \DOIprefix\doi{10.1016/j.sbspro.2013.08.306}.
  \bibinfo{note}{intelligent and Integrated Sustainable Multimodal
  Transportation Systems Proceedings from the 13th COTA International
  Conference of Transportation Professionals (CICTP2013)}.
\bibitem[{{Chicago Transit Authority}(2022)}]{CTAreport}
\bibinfo{author}{{Chicago Transit Authority}}, \bibinfo{year}{2022}.
\newblock \bibinfo{title}{Charging forward {CTA} bus electrification planning
  report}.
\newblock \URLprefix
  \url{https://www.transitchicago.com/assets/1/6/Charging\_Forward\_Report\_2-10-22\_(FINAL).pdf}.
\bibitem[{{ChicagoBus}(2023)}]{Chicagobus}
\bibinfo{author}{{ChicagoBus}}, \bibinfo{year}{2023}.
\newblock \bibinfo{title}{Bus garages}.
\newblock \URLprefix \url{https://www.chicagobus.org/garages/}.
\bibitem[{Cokyasar et~al.(2023a)Cokyasar, Verbas and
  Auld}]{cokyasar2023electric}
\bibinfo{author}{Cokyasar, T.}, \bibinfo{author}{Verbas, O.},
  \bibinfo{author}{Auld, J.}, \bibinfo{year}{2023}a.
\newblock \bibinfo{title}{Electric vehicle scheduling problem with tour
  combinations}.
\newblock \bibinfo{journal}{Procedia Computer Science} \bibinfo{volume}{220},
  \bibinfo{pages}{413--420}.
\newblock \DOIprefix\doi{10.1016/j.procs.2023.03.053}.
\bibitem[{Cokyasar et~al.(2023b)Cokyasar, Verbas, Davatgari and
  Mohammadian}]{cokyasarieee}
\bibinfo{author}{Cokyasar, T.}, \bibinfo{author}{Verbas, O.},
  \bibinfo{author}{Davatgari, A.}, \bibinfo{author}{Mohammadian, A.},
  \bibinfo{year}{2023}b.
\newblock \bibinfo{title}{Solving the electric vehicle scheduling problem at
  large-scale}, in: \bibinfo{booktitle}{The 26th IEEE Intelligent
  Transportation Systems Conference}.
\newblock \bibinfo{note}{In press for publication}.
\bibitem[{Diefenbach et~al.(2023)Diefenbach, Emde and
  Glock}]{DIEFENBACH2023828}
\bibinfo{author}{Diefenbach, H.}, \bibinfo{author}{Emde, S.},
  \bibinfo{author}{Glock, C.H.}, \bibinfo{year}{2023}.
\newblock \bibinfo{title}{Multi-depot electric vehicle scheduling in in-plant
  production logistics considering non-linear charging models}.
\newblock \bibinfo{journal}{European Journal of Operational Research}
  \bibinfo{volume}{306}, \bibinfo{pages}{828--848}.
\newblock \DOIprefix\doi{10.1016/j.ejor.2022.06.050}.
\bibitem[{Freling et~al.(2001)Freling, Wagelmans and
  Paix{\~a}o}]{freling2001models}
\bibinfo{author}{Freling, R.}, \bibinfo{author}{Wagelmans, A.P.},
  \bibinfo{author}{Paix{\~a}o, J.M.P.}, \bibinfo{year}{2001}.
\newblock \bibinfo{title}{Models and algorithms for single-depot vehicle
  scheduling}.
\newblock \bibinfo{journal}{Transportation Science} \bibinfo{volume}{35},
  \bibinfo{pages}{165--180}.
\newblock \DOIprefix\doi{10.1287/trsc.35.2.165.10135}.
\bibitem[{{FTA}(2010)}]{FTA}
\bibinfo{author}{{FTA}}, \bibinfo{year}{2010}.
\newblock \bibinfo{title}{Public transportation’s role in responding to
  climate change}.
\newblock \URLprefix
  \url{https://www.transit.dot.gov/sites/fta.dot.gov/files/docs/PublicTransport
  ationsRoleInRespondingToClimateChange2010.pdf}.
\bibitem[{{General Transit Feed Specification}(2022)}]{GTFS}
\bibinfo{author}{{General Transit Feed Specification}}, \bibinfo{year}{2022}.
\newblock \bibinfo{title}{Gtfs schedule reference}.
\newblock \URLprefix \url{https://gtfs.org/schedule/reference/}.
\bibitem[{Guihaire and Hao(2008)}]{GUIHAIRE20081251}
\bibinfo{author}{Guihaire, V.}, \bibinfo{author}{Hao, J.K.},
  \bibinfo{year}{2008}.
\newblock \bibinfo{title}{Transit network design and scheduling: A global
  review}.
\newblock \bibinfo{journal}{Transportation Research Part A: Policy and
  Practice} \bibinfo{volume}{42}, \bibinfo{pages}{1251--1273}.
\newblock \DOIprefix\doi{10.1016/j.tra.2008.03.011}.
\bibitem[{{Gurobi Optimization, LLC}(2020)}]{gurobi}
\bibinfo{author}{{Gurobi Optimization, LLC}}, \bibinfo{year}{2020}.
\newblock \bibinfo{title}{Gurobi optimizer reference manual}.
\newblock \bibinfo{howpublished}{Available at
  \url{https://www.gurobi.com/wp-content/plugins/hd_documentations/documentation/9.0/refman.pdf}
  accessed on Jun. 14, 2021}.
\bibitem[{Kernighan and Lin(1970)}]{Kernighan_Lin}
\bibinfo{author}{Kernighan, B.W.}, \bibinfo{author}{Lin, S.},
  \bibinfo{year}{1970}.
\newblock \bibinfo{title}{An efficient heuristic procedure for partitioning
  graphs}.
\newblock \bibinfo{journal}{The Bell System Technical Journal}
  \bibinfo{volume}{49}, \bibinfo{pages}{291--307}.
\newblock \DOIprefix\doi{10.1002/j.1538-7305.1970.tb01770.x}.
\bibitem[{Li(2014)}]{li2014transit}
\bibinfo{author}{Li, J.Q.}, \bibinfo{year}{2014}.
\newblock \bibinfo{title}{Transit bus scheduling with limited energy}.
\newblock \bibinfo{journal}{Transportation Science} \bibinfo{volume}{48},
  \bibinfo{pages}{521--539}.
\newblock \DOIprefix\doi{10.1287/trsc.2013.0468}.
\bibitem[{Li et~al.(2020)Li, Wang, Li, Feng, Wang and Cheng}]{Li2020}
\bibinfo{author}{Li, X.}, \bibinfo{author}{Wang, T.}, \bibinfo{author}{Li, L.},
  \bibinfo{author}{Feng, F.}, \bibinfo{author}{Wang, W.},
  \bibinfo{author}{Cheng, C.}, \bibinfo{year}{2020}.
\newblock \bibinfo{title}{Joint optimization of regular charging electric bus
  transit network schedule and stationary charger deployment considering
  partial charging policy and time-of-use electricity prices}.
\newblock \bibinfo{journal}{Journal of Advanced Transportation}
  \bibinfo{volume}{2020}, \bibinfo{pages}{8863905}.
\newblock \DOIprefix\doi{10.1155/2020/8863905}.
\bibitem[{Liu and {(Avi) Ceder}(2020)}]{LIU2020118}
\bibinfo{author}{Liu, T.}, \bibinfo{author}{{(Avi) Ceder}, A.},
  \bibinfo{year}{2020}.
\newblock \bibinfo{title}{Battery-electric transit vehicle scheduling with
  optimal number of stationary chargers}.
\newblock \bibinfo{journal}{Transportation Research Part C: Emerging
  Technologies} \bibinfo{volume}{114}, \bibinfo{pages}{118--139}.
\newblock \DOIprefix\doi{10.1016/j.trc.2020.02.009}.
\bibitem[{Muñoz et~al.(2022)Muñoz, Franceschini, Levitan, Rodriguez, Humana
  and {Correa Perelmuter}}]{MUNOZ2022115412}
\bibinfo{author}{Muñoz, P.}, \bibinfo{author}{Franceschini, E.A.},
  \bibinfo{author}{Levitan, D.}, \bibinfo{author}{Rodriguez, C.R.},
  \bibinfo{author}{Humana, T.}, \bibinfo{author}{{Correa Perelmuter}, G.},
  \bibinfo{year}{2022}.
\newblock \bibinfo{title}{Comparative analysis of cost, emissions and fuel
  consumption of diesel, natural gas, electric and hydrogen urban buses}.
\newblock \bibinfo{journal}{Energy Conversion and Management}
  \bibinfo{volume}{257}, \bibinfo{pages}{115412}.
\newblock \DOIprefix\doi{10.1016/j.enconman.2022.115412}.
\bibitem[{Perumal et~al.(2021)Perumal, Dollevoet, Huisman, Lusby, Larsen and
  Riis}]{PERUMAL2021105268}
\bibinfo{author}{Perumal, S.S.}, \bibinfo{author}{Dollevoet, T.},
  \bibinfo{author}{Huisman, D.}, \bibinfo{author}{Lusby, R.M.},
  \bibinfo{author}{Larsen, J.}, \bibinfo{author}{Riis, M.},
  \bibinfo{year}{2021}.
\newblock \bibinfo{title}{Solution approaches for integrated vehicle and crew
  scheduling with electric buses}.
\newblock \bibinfo{journal}{Computers \& Operations Research}
  \bibinfo{volume}{132}, \bibinfo{pages}{105268}.
\newblock \DOIprefix\doi{10.1016/j.cor.2021.105268}.
\bibitem[{Perumal et~al.(2022)Perumal, Lusby and Larsen}]{PERUMAL2022395}
\bibinfo{author}{Perumal, S.S.}, \bibinfo{author}{Lusby, R.M.},
  \bibinfo{author}{Larsen, J.}, \bibinfo{year}{2022}.
\newblock \bibinfo{title}{Electric bus planning \& scheduling: A review of
  related problems and methodologies}.
\newblock \bibinfo{journal}{European Journal of Operational Research}
  \bibinfo{volume}{301}, \bibinfo{pages}{395--413}.
\newblock \DOIprefix\doi{10.1016/j.ejor.2021.10.058}.
\bibitem[{Rinaldi et~al.(2020)Rinaldi, Picarelli, D'Ariano and
  Viti}]{RINALDI2020102070}
\bibinfo{author}{Rinaldi, M.}, \bibinfo{author}{Picarelli, E.},
  \bibinfo{author}{D'Ariano, A.}, \bibinfo{author}{Viti, F.},
  \bibinfo{year}{2020}.
\newblock \bibinfo{title}{Mixed-fleet single-terminal bus scheduling problem:
  Modelling, solution scheme and potential applications}.
\newblock \bibinfo{journal}{Omega} \bibinfo{volume}{96},
  \bibinfo{pages}{102070}.
\newblock \DOIprefix\doi{10.1016/j.omega.2019.05.006}.
\bibitem[{Sistig and Sauer(2023)}]{SISTIG2023120915}
\bibinfo{author}{Sistig, H.M.}, \bibinfo{author}{Sauer, D.U.},
  \bibinfo{year}{2023}.
\newblock \bibinfo{title}{Metaheuristic for the integrated electric vehicle and
  crew scheduling problem}.
\newblock \bibinfo{journal}{Applied Energy} \bibinfo{volume}{339},
  \bibinfo{pages}{120915}.
\newblock \DOIprefix\doi{10.1016/j.apenergy.2023.120915}.
\bibitem[{Sule(2007)}]{sule2007production}
\bibinfo{author}{Sule, D.R.}, \bibinfo{year}{2007}.
\newblock \bibinfo{title}{Production planning and industrial scheduling:
  examples, case studies and applications}.
\newblock \bibinfo{publisher}{CRC press}.
\bibitem[{Wen et~al.(2016)Wen, Linde, Ropke, Mirchandani and
  Larsen}]{WEN201673}
\bibinfo{author}{Wen, M.}, \bibinfo{author}{Linde, E.}, \bibinfo{author}{Ropke,
  S.}, \bibinfo{author}{Mirchandani, P.}, \bibinfo{author}{Larsen, A.},
  \bibinfo{year}{2016}.
\newblock \bibinfo{title}{An adaptive large neighborhood search heuristic for
  the electric vehicle scheduling problem}.
\newblock \bibinfo{journal}{Computers \& Operations Research}
  \bibinfo{volume}{76}, \bibinfo{pages}{73--83}.
\newblock \DOIprefix\doi{10.1016/j.cor.2016.06.013}.
\bibitem[{Wu et~al.(2022)Wu, Lin, Liu and Jin}]{WU2022322}
\bibinfo{author}{Wu, W.}, \bibinfo{author}{Lin, Y.}, \bibinfo{author}{Liu, R.},
  \bibinfo{author}{Jin, W.}, \bibinfo{year}{2022}.
\newblock \bibinfo{title}{The multi-depot electric vehicle scheduling problem
  with power grid characteristics}.
\newblock \bibinfo{journal}{Transportation Research Part B: Methodological}
  \bibinfo{volume}{155}, \bibinfo{pages}{322--347}.
\newblock \DOIprefix\doi{10.1016/j.trb.2021.11.007}.
\bibitem[{Xu et~al.(2023)Xu, Yu and Long}]{XU2023104057}
\bibinfo{author}{Xu, X.}, \bibinfo{author}{Yu, Y.}, \bibinfo{author}{Long, J.},
  \bibinfo{year}{2023}.
\newblock \bibinfo{title}{Integrated electric bus timetabling and scheduling
  problem}.
\newblock \bibinfo{journal}{Transportation Research Part C: Emerging
  Technologies} \bibinfo{volume}{149}, \bibinfo{pages}{104057}.
\newblock \DOIprefix\doi{10.1016/j.trc.2023.104057}.
\bibitem[{Yao et~al.(2020)Yao, Liu, Lu and Yang}]{YAO2020101862}
\bibinfo{author}{Yao, E.}, \bibinfo{author}{Liu, T.}, \bibinfo{author}{Lu, T.},
  \bibinfo{author}{Yang, Y.}, \bibinfo{year}{2020}.
\newblock \bibinfo{title}{Optimization of electric vehicle scheduling with
  multiple vehicle types in public transport}.
\newblock \bibinfo{journal}{Sustainable Cities and Society}
  \bibinfo{volume}{52}, \bibinfo{pages}{101862}.
\newblock \DOIprefix\doi{10.1016/j.scs.2019.101862}.
\bibitem[{Zhang et~al.(2021)Zhang, Li, Tu, Dong, Chen, Gao and
  Liu}]{Zhang_Li_Tu_Dong_Chen_Gao_Liu_2021}
\bibinfo{author}{Zhang, A.}, \bibinfo{author}{Li, T.}, \bibinfo{author}{Tu,
  R.}, \bibinfo{author}{Dong, C.}, \bibinfo{author}{Chen, H.},
  \bibinfo{author}{Gao, J.}, \bibinfo{author}{Liu, Y.}, \bibinfo{year}{2021}.
\newblock \bibinfo{title}{The effect of nonlinear charging function and line
  change constraints on electric bus scheduling}.
\newblock \bibinfo{journal}{Promet - Traffic \& Transportation}
  \bibinfo{volume}{33}, \bibinfo{pages}{527--538}.
\newblock \DOIprefix\doi{10.7307/ptt.v33i4.3730}.

\end{thebibliography}

\vfill
\framebox{\parbox{.90\linewidth}{\scriptsize The submitted manuscript has been created by
        UChicago Argonne, LLC, Operator of Argonne National Laboratory (``Argonne'').
        Argonne, a U.S.\ Department of Energy Office of Science laboratory, is operated
        under Contract No.\ DE-AC02-06CH11357.  The U.S.\ Government retains for itself,
        and others acting on its behalf, a paid-up nonexclusive, irrevocable worldwide
        license in said article to reproduce, prepare derivative works, distribute
        copies to the public, and perform publicly and display publicly, by or on
        behalf of the Government.  The Department of Energy will provide public access
        to these results of federally sponsored research in accordance with the DOE
        Public Access Plan \url{http://energy.gov/downloads/doe-public-access-plan}.}}
\end{document}